%
%
%

\documentclass[12pt,leqno]{amsart} 
\usepackage{amsmath,amstext,amsthm,amssymb,amsxtra}
\usepackage[colorlinks,citecolor=red,pagebackref,hypertexnames=false]{hyperref}
\usepackage[backrefs]{amsrefs}
\setlength{\textwidth}{16.6cm}
\setlength{\topmargin}{0cm}
\setlength{\oddsidemargin}{0cm}
\setlength{\evensidemargin}{0cm}
\setlength{\parskip}{11pt}
\allowdisplaybreaks
 
\theoremstyle{plain} 
\newtheorem{lemma}[equation]{Lemma} 
 
\newtheorem{theorem}[equation]{Theorem}

\theoremstyle{definition}
\theoremstyle{example}
\newtheorem*{example}{Example}
\theoremstyle{remark}
\newtheorem*{remark}{Remark}

\newcommand{\assign}{:=}

\newcommand{\nobracket}{}
\newcommand{\nocomma}{}
\newcommand{\nosymbol}{}
\newcommand{\tmem}[1]{{\em #1\/}}
\newcommand{\tmmathbf}[1]{\ensuremath{\boldsymbol{#1}}}
\newcommand{\tmop}[1]{\ensuremath{\operatorname{#1}}}
\newcommand{\tmrsub}[1]{\ensuremath{_{\textrm{#1}}}}
\newcommand{\tmrsup}[1]{\textsuperscript{#1}}



\numberwithin{equation}{section}


\def\norm#1.#2.{\lVert#1\rVert_{#2}}
\def\Norm#1.#2.{\bigl\lVert#1\bigr\rVert_{#2}}
\def\NOrm#1.#2.{\Bigl\lVert#1\Bigr\rVert_{#2}}
\def\NORm#1.#2.{\biggl\lVert#1\biggr\rVert_{#2}}
\def\NORM#1.#2.{\Biggl\lVert#1\Biggr\rVert_{#2}}


\def\ip#1,#2,{\langle #1,#2\rangle}
\def\Ip#1,#2,{\bigl\langle#1,#2\bigr\rangle}
\def\IP#1,#2,{\Bigl\langle#1,#2\Bigr\rangle}

\def\mid{\,:\,}

\def\abs#1{\lvert#1\rvert}

\def\XXint#1#2#3{{\setbox0=\hbox{$#1{#2#3}{\int}$}
     \vcenter{\hbox{$#2#3$}}\kern-.5\wd0}}

\def\eqdef{\stackrel{\mathrm{def}}{{}={}}}

%

%

%
%
%

%
%
%
%
%
%

%
%
\DeclareFontFamily{U}{wncy}{}
\DeclareFontShape{U}{wncy}{m}{n}{<->wncyr10}{}
\DeclareSymbolFont{mcy}{U}{wncy}{m}{n}
\DeclareMathSymbol{\Sh}{\mathord}{mcy}{"58}  

\begin{document}

\title {A Lower Bound Criterion for Iterated Commutators}
\author[L. Dalenc \and S. Petermichl]{Laurent Dalenc$^1$\and Stefanie Petermichl$^2$}

\address{Laurent Dalenc\\ Universit\'e Paul Sabatier\\}

\thanks{$1.$ Research supported in part by an ANR Grant.}

\address{Stefanie Petermichl\\ Universit\'e Paul Sabatier\\}

\thanks{$2.$ Research supported in part by ANR grant. The author is a junior member of IUF.}

\maketitle

\begin{abstract}
  We consider iterated commutators of multiplication by a symbol function $b$
  and smooth Calderon Zygmund operators, described by Fourier multipliers of
  homogeneity 0. We establish a criterion for a collection of symbols so that
  the corresponding Calderon Zygmund operators characterize product
  $\tmop{BMO}$ by means of iterated commutators. We therefore extend, in part,
  the line of one-parameter results following the work of Uchiyama and Li as
  well as the result in several parameters, concerning commutators with
  Riesz transforms by Lacey, Petermichl, Pipher, Wick.
\end{abstract}


\section{Introduction}

\label{s1}A classical result of Nehari {\cite{Ne}} \ shows that a Hankel
operator with antianalytic symbol $b$ is bounded if and only if the symbol
belongs to $\tmop{BMO}$. This theorem has an equivalent formulation by means
of commutators of a symbol function $b$ and the Hilbert transform, as the
latter are a combination of orthogonal Hankel operators. Nehari's result leans
on analytic structure in several crucial ways: the classical
factorization result for $H^1$ functions on the disk and  the
fact that the Hilbert transform is a Fourier projection operator.

The classical text of Coifman, Rochberg and Weiss {\cite{CRW}} \ extended the
one-parameter theory to real analysis in the sense that the Hilbert transforms
were replaced by Riesz transforms. In their text, they obtained sufficiency,
i.e. that a $\tmop{BMO}$ symbol $b$ yields an $L^2$ bounded commutator for
certain more general, convolution type singular integral operators. For
necessity, they showed that the collection of Riesz transforms was
representative enough. This is quite natural, in the view of the definition of
$H^1$ requiring Riesz transforms being back in $L^1$ as well as the 
Fefferman-Stein decomposition of $\tmop{BMO}$ using Riesz kernels.

Uchiyama {\cite{Ufeffstein}} revisited said decomposition, with a very
technical but constructive proof. It remarkably replaced the class of Riesz transforms 
by  more general classes of kernel
operators obeying a certain point separation criterion for their Fourier
multiplier symbols. See also \cite{Uhardychar} and \cite{Uhankel} for more natural questions in this direction. 
Li \cite{li} used a criterion similar to Uchiyama's,
to show that it was also a sufficiently representative class to characterize
$\tmop{BMO}$ by means of commutators.

All of these results date back to the 70s, 80s and 90s and consider $H^1$
spaces in one parameter and simple, i.e. non-iterated commutators.

It is well known that the product theory and with it the product $\tmop{BMO}$
space, as identified by Chang and Fefferman {\cite{CF1}}, \cite{CF2}  have more
complicated structure. We remind of Carleson's interesting example {\cite{C}}
\ illustrating this difference. The techniques to tackle the analogs of the
above questions in several parameters are very different and have brought,
with the works of Lacey and his collaborators, valuable new insight and use to
existing theories, for example in the interpretation of Journe's lemma in
combination with Carleson's example.

Ferguson and Lacey proved in {\cite{FL}} \ that the iterated commutator of the
Hilbert transform and multiplication by a symbol $b$ characterize
$\tmop{BMO}$, and with it, they proved the equivalent weak factorization
result for $H^1$ on the bidisk. Lacey and Terwilliger extended this result to
an arbitrary number of iterates in {\cite{LT}}, requiring thus, among others,
a refinement of Pipher's iterated multi-parameter version of Journ\'e's lemma.
The real variable analog, the result of Coifman, Rochberg and Weiss
{\cite{CRW}} using Riesz transforms instead of Hilbert transforms, was
extended to the multi parameter setting in {\cite{LPPW}}. In this current
paper, we extend in part, the direction of Uchiyama and Lee to several
parameters. We formulate a sufficient condition on a family of CZOs, so that
their iterated commutators characterize $\tmop{BMO}$:

For vectors $\vec{d} = (d_1, ..., d_t) \in \mathbb{N}^t$, we consider product
spaces $$\mathbb{R}^{\vec{d}} = \mathbb{R}^{d_1} \times ... \times
\mathbb{R}^{d_t}.$$ For each $1 \le s \le t$, we have a collection of Calderon
Zygmund operators $\mathcal{T}_s = \{T_{s, 1}, ..., T_{s, n_s} \}$, whose
kernels are homogeneous of degree $- d_s$, with Fourier multiplier symbols
represented by $\theta_{s, k_s} \in \mathcal{C}^{\infty} ( \mathcal{S}^{d_s -
1})$ that are in turn homogeneous of degree 0. For appropriate functions $f$
and symbols $b$ we consider the family of iterated commutators
\[ C_{\vec{k}} (b, \cdot) = [T_{1, k_1} [... [T_{t, k_t}, M_b] ...]] . \]
Here $1 \leq s \leq t, \vec{k} = (k_1, ..., k_t), 0 \leq k_s \leq n_s$ and
$T_{s, k_s}$ denotes the $k_s$th choice of CZO in the family $\mathcal{T}_s$
acting in the $s$th variable.

We impose the following restrictions on the classes $\mathcal{T}_s$ for each
parameter $s$ separately, easiest formulated in terms of their symbols:
\begin{itemize}
  \item $\forall x \neq y \in \mathbb{S}^{d_s - 1} \; \exists \; \theta_{s, i}$ so
  that $\theta_{s, i} ( x) \neq \theta_{s, i} ( y)$
  
  (full point separation on the sphere)
  
  \item $\forall$ $x \in \mathbb{S}^{d_s - 1} \; \forall t \tmop{tangent}
  \tmop{to} \mathbb{S}^{d_s - 1} \tmop{in} x \; \exists \; i \tmop{so} \tmop{that}
  \frac{\partial \theta_{s, i}}{\partial t} ( x) \neq 0$
  
  (existence of non-trivial tangential derivatives)
  \end{itemize}
  
  In the case that the kernels $K$ are not real valued, it appears that a last
condition is needed:

 \begin{itemize} 
  \item $\Theta_s $ is closed under complex conjugation
\end{itemize}

Infinite sets $\mathcal{T}_s$ are also included in our theorem at no additional cost. 

\begin{example}
  It is easy to check that the family of Riesz transforms in
  $\mathbb{R}^{d_s}$ satisfies these properties.
\end{example}

\begin{example}
  It is also not hard to check that the family of all rotations of any one
  smooth, dilation and translation invariant CZO $T$ with a discontinuity in 0 of its symbol in any
  given direction has these properties. Precisely we mean an operator $T$ that has a smooth
  symbol $m$ that is homogeneous of degree zero with the property that there
  exists $\xi \in \mathcal{S}^{d_s - 1}$ such that $m ( \xi) \neq m ( - \xi)$ (or, more generally, even any non-constant symbol $m$!).  Notice that in many cases, such as when we choose $T$ to be the first Riesz
  transform, a small number of rotations are sufficient to make up a family
  with the required properties. 
\end{example}

\begin{theorem}
  Under the conditions above on the classes $\mathcal{T}_s$, there exist
  constants $C_1, C_2 > 0$ so that $\forall b \in \tmop{BMO} (
  \mathbb{R}^{\vec{d}})$
  \[ C_1 || b ||_{BMO} \leq \sup_{0 \leq k_s \leq n_s} || [T_{1, k_1} [...
     [T_{t, k_t}, M_b] ...]] ||_2 \leq C_2 || b ||_{BMO} \]
  where we mean the product $\tmop{BMO}$ norm according to Chang and
  Fefferman. $T_{s, k_s}$ denotes the $k_s$th choice of CZO in the family
  $\mathcal{T}_s$ acting in the $s$th variable.
\end{theorem}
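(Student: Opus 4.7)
The upper bound is the easier half. For any fixed $\vec k$, an iterated commutator $C_{\vec k}(b,\cdot) = [T_{1,k_1}[\cdots[T_{t,k_t},M_b]\cdots]]$ is estimated by multi-parameter paraproduct decomposition: expanding $b$ and the input in tensor wavelets adapted to the product structure and regrouping as in Ferguson--Lacey and LPPW yields $\|C_{\vec k}(b,\cdot)\|_{2\to 2} \lesssim \|b\|_{\mathrm{BMO}}$ with implicit constant depending only on the CZ norms of the $T_{s,k_s}$ and the dimensions $d_s$. I would simply cite this part.

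For the lower bound, the plan is to pass to a dual weak factorization of the product Hardy space. By Chang--Fefferman duality, $\|b\|_{\mathrm{BMO}} \sim \sup\{|\langle b,\phi\rangle| : \|\phi\|_{H^1(\mathbb{R}^{\vec d})}=1\}$; at the same time, pairing the commutator against two $L^2$ functions exposes a bilinear form of type $\Pi_{\vec k}(f,g) = f\cdot T_{\vec k}^{\ast}g - g\cdot T_{\vec k}f$ (interpreted variable-wise). Thus $\sup_{\vec k}\|C_{\vec k}(b,\cdot)\|_2 \gtrsim \|b\|_{\mathrm{BMO}}$ will follow once we show that every $\phi\in H^1(\mathbb{R}^{\vec d})$ admits a representation $\phi=\sum_j\lambda_j\,\Pi_{\vec k_j}(f_j,g_j)$ with $\sum|\lambda_j|\|f_j\|_2\|g_j\|_2 \lesssim \|\phi\|_{H^1}$. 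This is the scheme of Ferguson--Lacey in the bidisk and its extension by Lacey--Terwilliger and LPPW.

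The new ingredient for replacing the Riesz transforms by a general family $\mathcal{T}_s$ is an Uchiyama-type approximation lemma in each parameter. The LPPW construction uses at each step that the Riesz symbols $\xi/|\xi|$ furnish global coordinates on $\mathbb{S}^{d_s-1}$, so that a wavelet frequency-localized near a given region can be decomposed into contributions that factor through a single $R_j$. In our setting, the analogue requires approximating smooth functions on $\mathbb{S}^{d_s-1}$ by polynomials in the $\theta_{s,k_s}$'s: closure under complex conjugation together with full point separation give uniform density by Stone--Weierstrass, while the nontrivial-tangential-derivative assumption upgrades this to a local $C^1$ approximation near every point, because $x\mapsto(\theta_{s,1}(x),\ldots,\theta_{s,n_s}(x))$ is an immersion and smooth functions on the sphere are locally smooth functions of the $\theta_{s,k_s}$'s. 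Composing such polynomial symbols with $T_{s,k_s}$ yields, again by Calder\'on--Zygmund theory, a CZO that plays the role of a Riesz transform in the LPPW step, up to an operator of arbitrarily small norm.

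The principal obstacle is making this approximation quantitative and scale-uniform within the LPPW bilinear bootstrap. That argument proceeds by iterated Journ\'e-type lemmas (in the strengthened form of Pipher and Lacey--Terwilliger) on unions of product rectangles, so the Uchiyama approximation must be compatible with a controlled enlargement of the class of shadow rectangles: each approximating polynomial has a fixed but possibly large degree, hence produces higher-order compositions of the $T_{s,k_s}$, and the resulting CZOs must satisfy uniform size/smoothness estimates on wavelets of all scales. Handling this bookkeeping, so that the approximation errors can be absorbed into the weak-factorization inequality simultaneously in all $t$ parameters, is the crux. Once it is arranged, the scheme of Lacey--Terwilliger and LPPW produces the desired $H^1$ factorization, and duality delivers the lower bound.
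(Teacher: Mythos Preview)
Your upper bound paragraph is fine; the paper simply cites LPPW's Theorem 5.3.

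The lower bound sketch has the right shape but two substantive gaps. First, Stone--Weierstrass is not enough. To build a Calder\'on--Zygmund operator from a polynomial $v$ in the $\theta_{s,k}$'s you need uniform $L^p$ bounds on the multiplier independent of the approximation, and for that you need control on $\|v-h\|_{C^d}$, not just $\|v-h\|_\infty$, so that a H\"ormander--Marcinkiewicz multiplier theorem applies. The correct tool is Nachbin's theorem: a subalgebra of $C^m(\mathfrak M)$ containing $1$, separating points, and having nonvanishing differential in every tangent direction is dense in the $C^m$ topology. This is precisely why both the point-separation \emph{and} the tangential-derivative hypotheses appear, and it gives global $C^d$ approximation, not the ``local $C^1$'' you describe.

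Second, and more structurally, the paper does not run a weak-factorization bootstrap and swap Riesz transforms for $\theta$'s inside it. It proves the commutator lower bound directly, following LPPW's cone-operator route: one first selects a finite family of \emph{cone operators} $T_C$ (smooth multipliers close to half-space projections), shows that for normalized $\beta=P_{\mathcal U}b$ there is a test function $\bar\gamma=\overline{T_{\vec D}\beta}$ with $\|[T_{C_1},\dots[T_{C_t},M_\beta]\dots]\bar\gamma\|_2\gtrsim 1$, and only \emph{then} approximates each cone symbol by a polynomial in the $\theta_{s,k}$'s via Nachbin. The identity $[TT',M_b]=T[T',M_b]+[T,M_b]T'$ reduces the polynomial commutator to a single $[T_{1,k_1},\dots[T_{t,k_t},M_b]\dots]$ against a modified test function. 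The induction is on $t$, with Li's one-parameter theorem as base case; an auxiliary $\mathrm{BMO}_{-1}$ lower bound (proved by weak factorization in $t-1$ parameters) disposes of symbols whose $\mathrm{BMO}_{-1}$ norm is already large, and Journ\'e's lemma with the $\mathcal U/\mathcal V/\mathcal W$ splitting handles the passage from $P_{\mathcal U}b$ back to $b$. Without the cone-operator intermediate you have no concrete mechanism producing a test function on which the commutator is large, and the ``quantitative and scale-uniform'' issue you flag as the crux is exactly what the cone machinery resolves.
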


It is well known, that theorems of this form have an equivalent formulation in
the language of weak factorization of Hardy spaces. For $\vec{k}$ a vector
with $1 \le k_s \le d_s$ and $1 \le s \le t$, let us denote by $\Pi_{\vec{k}}$
the bilinear operator obtained by unwinding the commutator:
\[ \langle C_{\vec{k}} (b, f), g \rangle_{L^2} = \langle b, \Pi_{\vec{k}} (f,
   g) \rangle_{L^2} . \]
The operator $\Pi_{\vec{k}}$ can be expressed as linear combination of
iterates of CZOs $T_{s, k_s}$ (and their adjoints), applied to $f, g$.

Using the notation
\[ \|f\|_{L^2 \ast L^2} = \inf \left\{ \sum_{\vec{k}} \sum_j \|
   \phi_j^{\vec{k}} \|_2 \| \psi_j^{\vec{k}} \|_2 \right\} \]
where the infimum runs over all possible decompositions of $f = \sum_{\vec{k}}
\sum_j \Pi_{\vec{k}} (\phi_j^{\vec{k}}, \psi_j^{\vec{k}})$. With the help of
the relevant commutator theorem, it is an exercise in duality to see the
following:

\begin{theorem}
  We have $H^1 ( \mathbb{R}^{\vec{d}}) = L^2 \ast L^2$. For any $f \in H^1 (
  \mathbb{R}^{\vec{d}})$ there exist sequences $\phi_j^{\vec{k}},
  \psi_j^{\vec{k}} \in L^2$ such that $f = \sum_{\vec{k}} \sum_j \Pi_{\vec{k}}
  (\phi_j^{\vec{j}}, \psi_j^{\vec{k}})$ with $\|f\|_{H^1} \sim \sum_{\vec{k}}
  \sum_j \| \phi_j^{\vec{k}} \|_2 \| \psi_j^{\vec{k}} \|_2$.
\end{theorem}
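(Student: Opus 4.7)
The plan is to deduce the identification $H^1 = L^2 \ast L^2$ as a routine duality consequence of the Main Theorem, exactly as the paper signals with the phrase \emph{exercise in duality}. Write $V = L^2 \ast L^2$, viewed as the normed space of $f$ admitting expansions $f = \sum_{\vec{k},j}\Pi_{\vec{k}}(\phi_j^{\vec{k}},\psi_j^{\vec{k}})$, normed by the infimum of $\sum \|\phi_j^{\vec{k}}\|_2\|\psi_j^{\vec{k}}\|_2$. Realizing $V$ as the quotient of the $\ell^1$-sum (over $\vec{k}$ and $j$) of projective tensor products $L^2 \otimes_\pi L^2$ by the closed kernel of the summation map into $H^1$ makes it a Banach space. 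The theorem is precisely the assertion that the natural inclusion $\iota : V \to H^1(\mathbb{R}^{\vec{d}})$ is a Banach space isomorphism.

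The inclusion $\iota$ is bounded by the upper half of the Main Theorem: for any representing sum of $f$ and any $b \in \mathrm{BMO}$,
\[ |\langle b, f\rangle| \le \sum_{\vec{k},j} |\langle C_{\vec{k}}(b,\phi_j^{\vec{k}}), \psi_j^{\vec{k}}\rangle| \le C_2\|b\|_{\mathrm{BMO}} \sum_{\vec{k},j} \|\phi_j^{\vec{k}}\|_2\|\psi_j^{\vec{k}}\|_2, \]
so taking the supremum over $\|b\|_{\mathrm{BMO}}\le 1$ (using Chang--Fefferman $H^1$--$\mathrm{BMO}$ duality) and the infimum over representations gives $\|f\|_{H^1} \le C_2\|f\|_V$. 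For the reverse inequality I apply the closed range theorem to $\iota$. Density of $\iota(V)$ in $H^1$ is Hahn--Banach: any $b \in \mathrm{BMO}$ annihilating the range forces $\langle C_{\vec{k}}(b,\phi),\psi\rangle = 0$ for every $\vec{k},\phi,\psi$, hence $C_{\vec{k}}(b,\cdot) \equiv 0$ for all $\vec{k}$, which by the lower half of the Main Theorem forces $b = 0$. Boundedness below of the adjoint $\iota^* : \mathrm{BMO} \to V^*$ is obtained by picking, for each $b$, indices $\vec{k}$ and unit $\phi,\psi \in L^2$ with $|\langle C_{\vec{k}}(b,\phi),\psi\rangle| \ge C_1 \|b\|_{\mathrm{BMO}}$, supplied by the Main Theorem; then $f = \Pi_{\vec{k}}(\phi,\psi) \in V$ has $\|f\|_V \le 1$ and witnesses $\|\iota^* b\|_{V^*} \ge C_1\|b\|_{\mathrm{BMO}}$. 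Hence $\iota^*$ is injective with closed range, so the closed range theorem makes $\iota$ surjective, and the open mapping theorem promotes it to an isomorphism. The existence of decompositions with $\sum\|\phi_j^{\vec{k}}\|_2\|\psi_j^{\vec{k}}\|_2 \sim \|f\|_{H^1}$ then reads off directly from the definition of the $V$-norm as an infimum.

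The only delicate point is the pairing identification: one must verify that for $b \in \mathrm{BMO}$ and $\phi,\psi \in L^2$ the Chang--Fefferman $H^1$--$\mathrm{BMO}$ pairing $\langle b,\Pi_{\vec{k}}(\phi,\psi)\rangle$ really equals the $L^2$ pairing $\langle C_{\vec{k}}(b,\phi),\psi\rangle$. For bounded compactly supported $b$ this is the algebraic identity defining $\Pi_{\vec{k}}$, and the general case is a routine density argument, approximating $b$ in $\mathrm{BMO}$ and using the $L^2$-boundedness of the individual operators $T_{s,k_s}$ to control the limit.
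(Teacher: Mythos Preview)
Your argument is correct and is precisely the duality exercise the paper alludes to but does not spell out: the upper bound of the Main Theorem gives boundedness of $\iota:L^2\ast L^2\to H^1$, and the lower bound gives that $\iota^{\ast}$ is bounded below, whence surjectivity of $\iota$ by the closed range theorem. One small caveat on your final paragraph: bounded compactly supported functions are not norm-dense in $\mathrm{BMO}$, so the pairing identity is more cleanly justified by approximating $\phi,\psi$ in $L^2$ (or by using weak-$\ast$ approximation of $b$, since $H^1$ is the predual) rather than by approximating $b$ in the $\mathrm{BMO}$ norm.
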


In this text we prefer the language of commutators in terms of upper
(sufficiency) and lower (necessity) bounds.

Our proof follows the machinery developed by Lacey and collaborators in \cite{FL},  \cite{LT},  \cite{LPPW}. In particular, we refine a strategy from \cite{LPPW}, to pass from the complex variable case and the Hilbert transform to the real variable  and Riesz transform case. The Fourier multipliers of the Riesz transforms are very special - monomials on the sphere. We establish such a passage for much more general multiplier operators.

It seems not possible to use any previously proved  characterization theorems directly. We can however reuse some of the general strategy and in particular, we manage to 
'black box'  the very technical wavelet support and paraproduct estimates found in different versions in previous works. In \cite{LPPW}, this part appears to be the most streamlined and is general enough to apply to our situation.

\section{A Brief Review of Multi-Parameter Theory}

\subsection{Wavelets in Higher Dimensions and Several
Parameters}\label{s.wavelet}

We will use the following dilation and translation operators on $\mathbb{R}^d$
\begin{align}
  \label{e.trans}\mathrm{Tr}\tmrsub{y}f(x) &
  {\assign}f(x-y){\hspace{0.25em}},{\hspace{1em}}y{\in}\mathbb{R}\tmrsup{d},\\
  \label{e.dil}\mathrm{Dil}\tmrsub{a}\tmrsup{(p)}f(x) &
  {\assign}a\tmrsup{-d/p}f(x/a){\hspace{0.25em}},{\hspace{1em}}a>0{\hspace{0.25em}},0<p{\le}{\infty}{\hspace{0.25em}}.
\end{align}
These will also be applied to sets, in an obvious fashion, in the case of $p =
\infty$.

By the {\tmem{($d$ dimensional) dyadic grid}} in $\mathbb{R}^d$ we mean the
collection of cubes
\[ \mathcal{D}_d \assign \left\{ j 2^k + [0, 2^k)^d \mid j \in \mathbb{Z}^d
   \hspace{0.25em}, k \in \mathbb{Z} \right\} . \]
   An elementary example of a wavelet system is the Haar system generated by  $h = -
\mathbf{1}_{(0, 1 / 2)} + \mathbf{1}_{(1 / 2, 1)}$ and $W=\mathbf{1}_{(0, 1)}.$ 
The principle requirement is that the functions $\{ \mathrm{Tr}_{c
(I)} \mathrm{Dil}^{(2)}_I w \mid I \in \mathcal{D}_1 \}$ form an orthonormal
basis for $L^2 ( \mathbb{R})$. 

The wavelet in this text should be thought of {\tmem{Meyer wavelet}} though, 
due to its extraordinary Fourier support properties. Although not explicit in this text, we 
borrow certain technical estimates that make decisive use of this feature of the Meyer wavelet. 

For $\varepsilon \in \{0, 1\}$, set $w^0 = w$ and $w^1 = W$, the superscript
\tmrsup{$0$} denoting that `the function has mean $0$,' while a superscript
\tmrsup{$1$} denotes that `the function is an $L^2$ normalized indicator
function.' In one dimension, for an interval $I$, set
\[ w^{\varepsilon}_I \assign \mathrm{Tr}_{c (I)} \mathrm{Dil}_{|I|}^{(2)}
   w^{\varepsilon} \hspace{0.25em} . \]
Multiresolution wavelets, such as the Haar or the Meyer wavelet have the
useful identity
\begin{equation}
  \label{e.FATHER} \sum_{I \supsetneq J} \langle f, w_I \rangle w_I = \langle
  f, w^1_J \rangle w^1_J \hspace{0.25em},
\end{equation}
The passage from $\mathbb{R}$ to $\mathbb{R}^d$ consists of a product of $d$
wavelets associated to intervals of the same size, so that the resulting
wavelet is associated to a cube.

Let $\sigma_d \assign \{0, 1\}^d - \{ \vec{1} \}$, which we refer to
as {\tmem{signatures}}. In $d$ dimensions, for a cube $Q$ with side $|I|$,
i.e., $Q = I_1 \times \cdots \times I_d$, and a choice of $\varepsilon \in
\sigma_d$, set
\[ w^{\varepsilon}_Q (x_1, \ldots, x_d) \assign \prod_{j = 1}^d
   w_{I_j}^{\varepsilon_j} (x_j) . \]
It is then the case that the collection of functions
\[ \mathrm{Wavelet}_{\mathcal{D}_d} \assign \{w_Q^{\varepsilon} \mid Q \in
   \mathcal{D}_d \hspace{0.25em}, \varepsilon \in \sigma_d \} \]
form a wavelet basis for $L^p ( \mathbb{R}^d)$ for any choice of $d$
dimensional dyadic grid $\mathcal{D}_d$. Here, we are using the notation
$\vec{1} = (1, \ldots, 1)$. 

The passage to the tensor product setting, $\mathbb{R}^{\vec{d}} =
\mathbb{R}^{d_1} \times ... \times \mathbb{R}^{d_t}$ consists of a product of
$t$ wavelets associated to cubes of possibly different size, so that the
resulting wavelet is associated to a rectangle.

For a vector $\vec{d} = (d_1, \ldots, d_t)$, and $1 \le s \le t$, let
$\mathcal{D}_{d_s}$ be a choice of $d_s$ dimensional dyadic grid, and let
\[ \mathcal{D}_{\vec{d}} = \otimes_{s = 1}^t \mathcal{D}_{d_s} \hspace{0.25em}
   . \]
Also, let $\sigma_{\vec{d}} \assign \{ \vec{\varepsilon} =
(\varepsilon_1, \ldots, \varepsilon_t) : \varepsilon_s \in
\sigma_{d_s} \}$. Note that each $\varepsilon_s$ is a vector, and so
$\vec{\varepsilon}$ is a `vector of vectors'. For a rectangle $R = Q_1 \times
\cdots \times Q_t$, being a product of cubes of possibly different dimensions,
and a choice of vectors $\vec{\varepsilon} \in \sigma_{\vec{d}}$ set
\[ w_R^{\vec{\varepsilon}} (x_1, \ldots, x_t) = \prod_{s = 1}^t
   w_{Q_s}^{\varepsilon_s} (x_s) . \]
These are the appropriate functions and bases to analyze multiparameter
paraproducts and commutators.

So the collection of wavalets associated to a dyadic grid in the product
setting $\mathcal{D}_{\vec{d}}$ is
\[ \left\{ w_R^{\vec{\varepsilon}} \mid R \in \mathcal{D}_{\vec{d}}
   \hspace{0.25em}, \vec{\varepsilon} \in \sigma_{\vec{d}} \right\}
   \hspace{0.25em} . \]
This is a basis in $L^p ( \mathbb{R}^{\vec{d}})$.

\subsection{Chang--Fefferman $\tmop{BMO}$}

\label{s.cfBMO}

Let us describe product Hardy space theory. By this, we mean the
Hardy spaces associated with domains like $\otimes_{s = 1}^t
\mathbb{R}^{d_s}$.

The Hardy space $H^1 ( \mathbb{R}^d)$ denotes the class of
functions with the norm
\[ \sum_{j = 0}^d \|R_j f \|_1 \]
where $R_j$ denotes the $j$th Riesz transform. We adopt the
convention that $R_0$, the $0$th Riesz transform, is the identity. This space
is invariant under the one parameter family of isotropic dilations, while $H^1
( \mathbb{R}^{\vec{d}})$ is invariant under dilations of each coordinate
separately. This invariance under a $t$ parameter family of
dilations gave rise to the term `multi parameter' theory.

The product space $H^1 ( \mathbb{R}^{\vec{d}})$ has a variety of equivalent
norms, in terms of square functions, (strong) maximal functions and Riesz transforms.

The dual of the real Hardy space is $$H^1 ( \mathbb{R}^{\vec{d}})^{\ast} =
\text{\tmop{BMO}} ( \mathbb{R}^{\vec{d}}),$$ the $t$--fold product BMO space.
It is a Theorem of Chang and Fefferman {\cite{CF2}}  that this space
has a characterization in terms of a product Carleson measure.

Define
\begin{equation}
  \label{e.BMOdef} \lVert b \rVert_{\text{\tmop{BMO}} ( \mathbb{R}^{\vec{d}})}
  \assign \sup_{U \subset \mathbb{R}^{\vec{d}}} \left[ |U|^{- 1} \sum_{R
  \subset U} \sum_{\vec{\varepsilon} \in \sigma_{\vec{d}}} | \langle
  b, w_R^{\vec{\varepsilon}} \rangle |^2 \right]^{1 / 2} .
\end{equation}
Here the supremum is taken over all open subsets $U \subset
\mathbb{R}^{\vec{d}}$ with finite measure, and we use a wavelet basis
$w_R^{\vec{\varepsilon}}$.

\begin{theorem}(Chang, Fefferman)
\label{t.changfefferman} We have
  the equivalence of norms
  \[ \|b\|_{(H^1 ( \mathbb{R}^{\vec{d}}))^{\ast}} \approx
     \|b\|_{BMO ( \mathbb{R}^{\vec{d}})} \]
  That is, $BMO( \mathbb{R}^{\vec{d}})$ is the dual to $H^1
  ( \mathbb{R}^{\vec{d}})$.
\end{theorem}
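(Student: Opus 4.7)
The plan is to prove both inequalities through the wavelet model of $H^1(\mathbb{R}^{\vec d})$ and the Carleson-type characterization of $\operatorname{BMO}$ given in (\ref{e.BMOdef}). The main tool is the square function
$$S_{\vec d}f(x) \assign \left( \sum_{R \in \mathcal{D}_{\vec d}} \sum_{\vec\varepsilon \in \sigma_{\vec d}} |\langle f, w_R^{\vec\varepsilon}\rangle|^2 \, |R|^{-1} \mathbf{1}_R(x) \right)^{1/2},$$
which, via the Chang--Fefferman square function characterization of product Hardy space, satisfies $\|S_{\vec d} f\|_{L^1(\mathbb{R}^{\vec d})} \sim \|f\|_{H^1(\mathbb{R}^{\vec d})}$.

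For the upper bound $\|b\|_{(H^1)^\ast} \lesssim \|b\|_{\operatorname{BMO}}$, I would expand the pairing in the wavelet basis,
$$\langle b,f\rangle = \sum_{R,\vec\varepsilon} \langle b,w_R^{\vec\varepsilon}\rangle \langle f,w_R^{\vec\varepsilon}\rangle,$$
and control it by a product-parameter Carleson embedding: the Chang--Fefferman $\operatorname{BMO}$ condition says precisely that $\{|\langle b,w_R^{\vec\varepsilon}\rangle|^2\}$ is a $t$-parameter Carleson sequence. Following the standard route, I would linearize by level sets $\Omega_k = \{S_{\vec d}f > 2^k\}$, collect rectangles $R$ according to the largest $k$ with $|R \cap \Omega_k| > \tfrac12 |R|$, and sum using Cauchy--Schwarz inside each layer. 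In the multi-parameter setting, a direct stopping-time argument fails because dyadic rectangles do not nest, so Journ\'e's covering lemma must be invoked to pass from an open set $\Omega_k$ to a controlled union of maximal subrectangles whose enlargement still has comparable measure. The resulting estimate reads $|\langle b,f\rangle| \lesssim \|b\|_{\operatorname{BMO}} \|S_{\vec d}f\|_{L^1} \sim \|b\|_{\operatorname{BMO}}\|f\|_{H^1}$.

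For the lower bound, given $\Lambda \in (H^1)^\ast$, I would use Hahn--Banach to extend it to $L^2$ (wavelets of $H^1$ atoms lie in $L^2$), and record the coefficients $\beta_R^{\vec\varepsilon} \assign \Lambda(w_R^{\vec\varepsilon})$, producing the candidate symbol $b$ through its wavelet expansion. Fix any open set $U \subset \mathbb{R}^{\vec d}$ of finite measure and consider the test function
$$f_U \assign \sum_{R \subset U}\sum_{\vec\varepsilon} \overline{\beta_R^{\vec\varepsilon}}\, w_R^{\vec\varepsilon}, \qquad \Lambda(f_U) = \sum_{R \subset U,\vec\varepsilon} |\beta_R^{\vec\varepsilon}|^2.$$
The remaining task is to show $\|f_U\|_{H^1} \lesssim |U|^{1/2}\, \bigl(\sum_{R\subset U,\vec\varepsilon}|\beta_R^{\vec\varepsilon}|^2\bigr)^{1/2}$. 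This is done by splitting $\mathbb{R}^{\vec d}$ into the Journ\'e enlargement $\widetilde U$ and its complement: on $\widetilde U$, a Cauchy--Schwarz estimate bounds $\int_{\widetilde U} S_{\vec d}f_U \le |\widetilde U|^{1/2} \|f_U\|_{L^2}$, and $|\widetilde U| \lesssim |U|$ by the strong maximal function bound applied in the Journ\'e step; the contribution off $\widetilde U$ is controlled by the sharp decay of the Meyer wavelets (one of the key reasons Meyer wavelets are chosen in \S\ref{s.wavelet}). Dividing by $|U|^{1/2}$ and taking the supremum over $U$ produces the $\operatorname{BMO}$ norm of $b$, bounded by $\|\Lambda\|_{(H^1)^\ast}$.

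The principal obstacle is \emph{not} the algebraic dualization but the multi-parameter geometry: the standard one-parameter stopping-time argument breaks down because dyadic rectangles are not linearly ordered by inclusion, so both directions hinge on a correct use of Journ\'e's covering lemma to replace open sets by well-behaved families of rectangles whose enlargements have comparable measure. Once Journ\'e's lemma is in place, the two bounds become mirror images of each other, with the Carleson condition (\ref{e.BMOdef}) feeding into the upper bound and the test-function construction feeding into the lower bound.
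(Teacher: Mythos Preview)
The paper does not prove this theorem. It is quoted as background, attributed to Chang and Fefferman with a citation to \cite{CF2}, and used without proof throughout the rest of the argument. There is therefore no proof in the paper to compare your proposal against.

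For what it is worth, your sketch is a reasonable outline of the wavelet route to this classical duality, with one overstatement: Journ\'e's lemma is not actually needed for either direction of the duality. In the upper bound, once you sort rectangles by the largest $k$ with $|R\cap\Omega_k|>\tfrac12|R|$, each such $R$ lies in $\widetilde\Omega_k=\{M_s\mathbf{1}_{\Omega_k}>\tfrac12\}$, and the $L^2$ bound for the strong maximal function already gives $|\widetilde\Omega_k|\lesssim|\Omega_k|$; Cauchy--Schwarz on each layer then closes the Carleson embedding without any covering lemma. In the lower bound, the square function $S_{\vec d}f_U$ you wrote is literally supported in $U$ (the indicators $\mathbf{1}_R$ with $R\subset U$ vanish off $U$), so $\|S_{\vec d}f_U\|_1\le|U|^{1/2}\|f_U\|_2$ follows from Cauchy--Schwarz on $U$ alone, with no enlargement step and no wavelet-tail analysis required. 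In this paper Journ\'e's lemma is invoked for a different purpose---comparing product $\operatorname{BMO}$ with $\operatorname{BMO}_{-1}$ in the commutator lower bound of Section~\ref{s.lowerbound}---not for the $H^1$--$\operatorname{BMO}$ duality itself.
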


Notice that this space $\tmop{BMO}$ is invariant under a $t$-parameter family of dilations. Here
the dilations are isotropic in each parameter separately. This fact is also represented 
by the choice of our wavelet system.

\subsection{Journ\'e's Lemma}

Notice that the supremum in the wavelet definition of BMO runs over open sets of finite measure. This supremum restricted just to rectangles gives the definition of the larger rectangular BMO. There is a substantial geometric difference: the maximal dyadic sub-rectangles of any arbitrary rectangle are disjoint while those maximal dyadic sub-rectangles in open sets are not necessarily comparable by inclusion. It is in part due to this difference that, in the same way as in \cite{FL}, a geometric lemma by Journ\'e \cite{J} involving rectangles in the plane, particularly useful in handling collections of rectangles not comparable by inclusion, comes into play. It was first observed by Ferguson and Lacey that Journ\'e's lemma could be improved to partially compare rectangular BMO and product BMO of two parameters.

A $n$-dimensional version of Journ\'e's  original lemma is due to Pipher \cite{Pi} and makes use of iterations. This is the reason why we are going to have to replace the rectangular BMO space by another version of BMO that allows us to induct on the number of parameters in our commutator and therefore make use of the iterated nature of Journ\'e's lemma in more than two parameters. This idea was first used in \cite{LT}. 

Say that a collection of rectangles $\mathcal{U} \subset
\mathcal{D}_{\vec{d}}$ has $t - 1$ parameters \ if and only if there is a
choice of coordinate $s$ so that for all $R, R' \in \mathcal{U}$ we have $Q_s
= Q_s'$, that is the $s$th coordinate of the rectangles are all one fixed
$d_s$ dimensional cube.

We then define
\[ \| f \|_{\tmop{BMO}_{- 1} ( \mathbb{R}^{\vec{d}})} = \sup_{\mathcal{U}
   \tmop{has} t - 1 \tmop{parameters}} \left( | \tmop{sh} ( \mathcal{U}) |^{-
   1} \sum_{\vec{\varepsilon}} \sum_{R \in \mathcal{U}} | \langle f,
   w_R^{\vec{\varepsilon}} \rangle |^2 \right)^{1 / 2} \]

In this notation, a collection of rectangles has a shadow given by $\tmop{sh} ( \mathcal{U}) =
\cup \{ R : R \in \mathcal{U} \}$. The $- 1$ subscript is used to indicate
that we have `reduced by one parameter' in the definition.  The reader may be more familiar with the
rectangular $\tmop{BMO}$ space mentioned above. In two parameters, the space $\tmop{BMO}_{-
1}$ is larger than rectangular $\tmop{BMO}$.

Carleson produced examples of functions which acted as linear functionals
on \ $H^1 ( \mathbb{R}^{\vec{d}})$ with norm one, yet had arbitrarily small
 rectangular $\tmop{BMO}$ norm (and hence arbitrarily small $\tmop{BMO}_{-
1}$ norm). 

Here is the precise version of the above mentioned refinement of Journ\'e's lemma. It permits us, with certain restrictions and by inducing a damping factor, to control the
$\tmop{BMO}$ norm by the $\tmop{BMO}_{- 1} $norm. 
\begin{lemma}
  Let $\mathcal{U}$ be a collection of rectangles of finite shadow. For any $a > 0$, we can
  construct $V \supset \tmop{sh} ( \mathcal{U})$ together with a function
  $E : \mathcal{U} \rightarrow [ 1 \nocomma, \infty] $
  so that
  $E ( R) \cdot R \subset V $
  for all $R \in \mathcal{U}$,
  $ | V | < ( 1 + a) | \tmop{sh} ( \mathcal{U}) |, $ and last that 
  \[ \left\| \sum_{\vec{\varepsilon}} \sum_{R \in \mathcal{U}} E ( R)^{- C}
     \langle b, w_R^{\vec{\varepsilon}} \rangle w_R^{\vec{\varepsilon}}
     \right\|_{\tmop{BMO}} \leq K_a \| b \|_{\tmop{BMO}_{- 1}} . \]
  Here $C$ depends only on $\vec{d}$ and $K_a$ on $a$ and $\vec{d}$.
    
\end{lemma}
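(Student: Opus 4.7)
\emph{Strategy.} The plan is to follow the iterated construction of Pipher, Ferguson--Lacey, and Lacey--Terwilliger: build $V$ as a sequence of $t$ one-parameter enlargements of $\tmop{sh}(\mathcal{U})$, define $E(R)$ as the largest dilation keeping $\lambda R$ inside $V$, and then estimate the BMO norm of the damped wavelet sum by testing against an arbitrary open set $U' \subset \mathbb{R}^{\vec{d}}$ and slicing the rectangles in $\mathcal{U}$ by one coordinate at a time, so that the $\tmop{BMO}_{-1}$ definition can be applied directly to each slice.

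\emph{Construction of $V$ and $E$.} Set $V_0 \assign \tmop{sh}(\mathcal{U})$. For each parameter $s = 1, \ldots, t$, apply a one-parameter Journ\'e-type enlargement in the $s$-th direction: for each dyadic scale appearing in the $s$-th coordinate of some $R \in \mathcal{U}$, thicken $V_{s-1}$ in that direction using a strong-maximal-function argument, choosing the thickening parameters so that $|V_s| - |V_{s-1}| \le (a/t)|V_0|$. Set $V \assign V_t$, so $|V| < (1 + a)|\tmop{sh}(\mathcal{U})|$, and define
\[ E(R) \assign \sup\{ \lambda \ge 1 \mid \lambda \cdot R \subset V \}. \]
By construction $E(R) \cdot R \subset V$, and the classical one-parameter Journ\'e estimate applied at every stage gives a $\delta = \delta(\vec{d}) > 0$ such that for every sub-collection $\mathcal{U}' \subset \mathcal{U}$ whose shadow lies in a measurable set $W$,
\[ \sum_{R \in \mathcal{U}'} |R|\, E(R)^{-\delta} \lesssim |W|. \]

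\emph{BMO estimate.} Fix an open set $U' \subset \mathbb{R}^{\vec{d}}$ of finite measure, select any coordinate $s$, and partition the family $\{R \in \mathcal{U} \mid R \subset U'\}$ according to the $s$-th coordinate cube $Q_s$ of $R$. Each group $\mathcal{U}(Q_s)$ has $t-1$ parameters in the sense of the lemma, so the definition of $\tmop{BMO}_{-1}$ yields
\[ \sum_{\vec{\varepsilon}} \sum_{R \in \mathcal{U}(Q_s)} |\langle b, w_R^{\vec{\varepsilon}} \rangle|^2 \le |\tmop{sh}(\mathcal{U}(Q_s))|\, \|b\|_{\tmop{BMO}_{-1}}^2. \]
Inserting the weight $E(R)^{-2C}$, choosing $C$ so that $2C \ge \delta$, and summing over $Q_s$ using the local Journ\'e estimate applied to the shadows $\tmop{sh}(\mathcal{U}(Q_s))$, bounds the total weighted shadow sum by a constant multiple of $|U'|$; this gives the claimed BMO bound after taking the supremum over $U'$.

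\emph{Main obstacle.} The delicate point is that the single damping factor $E(R)^{-2C}$ must absorb the overcounting from \emph{every} possible coordinate-slicing simultaneously, not just the one chosen in the argument above. This is precisely why the $t$-step iterated enlargement of $V$ is essential: a single one-parameter thickening produces an $E(R)$ sensitive only to one direction, whereas the iterated construction ensures $E(R)$ measures proximity to $\partial V$ in all $t$ coordinates, and the exponent $C$ has to be chosen large enough in terms of $\vec{d}$ so that the geometric damping dominates the combinatorial overlap in each slicing.
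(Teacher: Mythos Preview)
The paper does not prove this lemma at all; immediately after stating it the authors write ``A good and more complete reference on the subject is \cite{varjourne}'' and move on. So there is no ``paper's own proof'' to compare against --- you are attempting what the authors deliberately outsourced to Cabrelli--Lacey--Molter--Pipher.

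Your overall strategy (iterated enlargement of $\tmop{sh}(\mathcal{U})$, test the BMO norm on an arbitrary open $U'$, slice by one coordinate, invoke $\tmop{BMO}_{-1}$ on each slice) is indeed the shape of the argument in the literature. But there is a genuine gap at the step you label ``summing over $Q_s$ using the local Journ\'e estimate applied to the shadows.'' Two problems:

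First, your stated ``local Journ\'e estimate'' $\sum_{R \in \mathcal{U}'} |R|\,E(R)^{-\delta} \lesssim |W|$ is false for arbitrary sub-collections $\mathcal{U}'$. Journ\'e's lemma controls such sums only over \emph{maximal} rectangles; if $\mathcal{U}'$ contains nested chains (say all $I \times [0,1]$ with $I$ dyadic in $[0,1]$), the sum diverges even with your damping, because for those $R$ the global dilation factor $E(R)$ you defined stays bounded. Second, what you actually need is a bound on $\sum_{Q_s}(\text{damping})\,|\tmop{sh}(\mathcal{U}(Q_s))|$, and this is neither the quantity in your ``local estimate'' nor derivable from it: within a single slice $\mathcal{U}(Q_s)$ the values $E(R)$ vary, so it is unclear what weight you are attaching to $|\tmop{sh}(\mathcal{U}(Q_s))|$, and the shadows for different scales of $Q_s$ overlap heavily.

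The fix in the references is to define the embedding factor \emph{coordinate-wise} (a separate $\mu_s(R)$ for each parameter, measuring how far $R$ can be stretched in the $s$-th direction alone while staying in the enlarged set at that stage), and to run the Journ\'e inequality and the BMO test by induction on $t$, using the $(t-1)$-parameter version both for the geometric Carleson estimate and for the $\tmop{BMO}_{-1}$ comparison. A single global dilation $E(R) = \sup\{\lambda : \lambda R \subset V\}$ does not carry enough directional information to make the slicing argument close; this is the point your ``Main obstacle'' paragraph correctly senses but does not resolve.
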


A good and more complete reference on the subject is \cite{varjourne}.

\subsection{Remarks on the Upper Bound}

We are going to assume that $ K$ is a smooth Calder\'on--Zygmund convolution 
kernel on $ \mathbb R ^{d} \times 
\mathbb R ^{d}$.  This means that the kernel is a distribution that 
satisfies the estimates below for $ x\neq y$
\begin{equation} \label{e.CZ}
\begin{split}
\abs{ \nabla ^{j} K (y)} &\le N  \abs{ y} ^{-d-j}\,, \quad j=0,1,2, \dotsc, d+1\,. 
\\
\norm \widehat K . L ^{\infty } . &\le N  \,. 
\end{split}
\end{equation}
The first  estimate combines the standard size and 
smoothness estimate.
The last assumption is equivalent 
to assuming that the operator defined on Schwartz functions by 
\begin{equation*}
\operatorname T _{K} f (x) \eqdef \int K (x-y) f (y)\; dy
\end{equation*}
extends to a bounded operator on $ L^2 (\mathbb R ^{d})$.  
 
 If $ K_1,\dotsc,K_t$ is a sequence of Calder\'on--Zygmund kernels, 
 with $ K_s$ defined on $ \mathbb R ^{d_s} \times \mathbb R ^{d_s}$.  
 It is not obvious that the corresponding tensor product operator 
 \begin{equation*}
T _{K_1} \otimes \cdots \otimes T _{K_t}
\end{equation*}
is a bounded operator on $ L^p (\mathbb R ^{\vec d})$.  This is a consequence of  
multi-parameter Calder\'on--Zygmund theory.

By \cite{LPPW}, theorem (5.3), multi-parameter commutators are bounded operators if the symbol belongs to $\tmop{BMO}$:

\begin{theorem}\label{t.upper}  For $ 1<p<\infty $, 
\begin{equation}\label{e.BMOupper}
\norm  [ \operatorname T _{K_1}, 
\cdots [ \operatorname T _{K_t},\operatorname M_b]
\cdots ] .p\to p. \lesssim  \norm b.\textup{BMO}.\,.
\end{equation}
By $ \textup{BMO}$, we mean Chang--Fefferman $ \textup{BMO}$. 
The implied constant depends upon the vector $ \vec d$, and the 
$ \operatorname T _{K_s}$. 
\end{theorem}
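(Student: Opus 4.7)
The plan is to reduce the iterated commutator to a finite sum of multi-parameter paraproducts and then invoke product-BMO bounds on those paraproducts, following the approach of \cite{LPPW}.

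First, in one variable, the wavelet expansion of the pointwise product $bf$ yields the standard decomposition
\[
M_b = \Pi_b + \Pi_b^* + \Gamma_b,
\]
where the three paraproducts are distinguished by whether the wavelet index of $b$ is strictly larger than, strictly smaller than, or equal to that of $f$. Each is $L^p$-bounded with operator norm $\lesssim \|b\|_{\mathrm{BMO}}$ by standard Littlewood--Paley theory, so a single-parameter commutator satisfies
\[
\| [T_K, M_b] \|_{p \to p} \le 2 \|T_K\|_{p \to p} \bigl( \|\Pi_b\|_{p\to p} + \|\Pi_b^*\|_{p\to p} + \|\Gamma_b\|_{p\to p} \bigr) \lesssim \|b\|_{\mathrm{BMO}} .
\]

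Next, I would iterate the decomposition coordinate by coordinate. Applying the paraproduct identity in each of the $t$ parameters of $\mathbb R^{\vec d}$ yields
\[
M_b = \sum_{\vec\sigma \in \{1,2,3\}^t} \Pi_b^{\vec\sigma},
\]
where $\Pi_b^{\vec\sigma}$ is a multi-parameter paraproduct whose type in coordinate $s$ is dictated by $\sigma_s$, with wavelet coefficients read against product wavelets $w_R^{\vec\varepsilon}$. Expanding the nested commutator $[T_{K_1}, [\,\cdots [T_{K_t}, M_b]\cdots ]]$ via the commutator identity produces at most $2^t$ signed compositions with $M_b$ inserted in varying positions, and substituting the paraproduct decomposition gives at most $3^t \cdot 2^t$ terms, each a composition of the fixed CZOs $T_{K_s}$ with paraproduct pieces across the $t$ parameters.

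The decisive step is then the following paraproduct bound: every multi-parameter paraproduct $\Pi_b^{\vec\sigma}$ is bounded on $L^p(\mathbb R^{\vec d})$ with norm controlled by $\|b\|_{\mathrm{BMO}(\mathbb R^{\vec d})}$ in the Chang--Fefferman sense of \eqref{e.BMOdef}. Combined with the $L^p$-boundedness of the tensor product $T_{K_1} \otimes \cdots \otimes T_{K_t}$ (itself a consequence of multi-parameter Calder\'on--Zygmund theory), this controls each of the $(6)^t$ compositions, and the triangle inequality closes the argument.

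The main obstacle is precisely the boundedness of the multi-parameter paraproducts with respect to Chang--Fefferman BMO. In a single parameter this is routine Carleson-embedding / Littlewood--Paley, but in the product setting the square-function estimates natural to paraproducts are indexed by rectangles and therefore only see the (strictly weaker) rectangular BMO, while the Chang--Fefferman norm involves a supremum over arbitrary open sets. Resolving this requires the full $H^1(\mathbb R^{\vec d})$--$\mathrm{BMO}(\mathbb R^{\vec d})$ duality of Theorem \ref{t.changfefferman}, and in some arguments an application of Journ\'e's lemma to pass between the rectangular and open-set supremums; this is the technical core that we are prepared to black-box from \cite{LPPW}.
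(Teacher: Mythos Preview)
The paper does not supply its own proof of this statement; it is quoted directly as Theorem~(5.3) of \cite{LPPW} and used as a black box. So there is no in-paper argument to compare against beyond that citation.

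Your sketch follows the paraproduct philosophy of \cite{LPPW}, but the one-parameter step as written contains a genuine gap. You assert that $M_b=\Pi_b+\Pi_b^{*}+\Gamma_b$ with \emph{each} piece bounded on $L^p$ by $\|b\|_{\mathrm{BMO}}$, and then estimate the commutator by the crude triangle inequality $\|[T_K,M_b]\|\le 2\|T_K\|\bigl(\|\Pi_b\|+\|\Pi_b^{*}\|+\|\Gamma_b\|\bigr)$. This cannot hold: summing three operators of norm $\lesssim\|b\|_{\mathrm{BMO}}$ would make $M_b$ itself bounded on $L^p$, which fails for every $b\in\mathrm{BMO}\setminus L^{\infty}$. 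Concretely, in the wavelet splitting by relative scale the piece where the scale of $b$ is \emph{coarser} than that of $f$ is $\Lambda_b f=\sum_J\langle b\rangle_J\langle f,w_J\rangle w_J$, a martingale-transform-type multiplier whose boundedness would require $\sup_J|\langle b\rangle_J|<\infty$, not merely $b\in\mathrm{BMO}$. The same defect propagates verbatim to your $3^t$-term tensor decomposition.

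What is missing is exactly the cancellation carried by the commutator. In \cite{LPPW} one does not bound $T_K\Lambda_b$ and $\Lambda_b T_K$ separately; rather one expands $[T_K,M_b]$ itself (equivalently, expands $T_K$ in the wavelet basis as well) and checks that the dangerous contribution $[T_K,\Lambda_b]$ reorganises into genuine paraproducts, because the almost-diagonal action of $T_K$ on $w_J$ restores the frequency localisation that $\Lambda_b$ alone lacks. Only after that reduction does one arrive at the multi-parameter paraproducts whose Chang--Fefferman $\mathrm{BMO}$ bound is the technical heart you correctly flag and are prepared to black-box. Your endpoint is right; the shortcut to it is not.
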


The Calderon-Zygmund operators we are concerned about in this text are assumed to have 'infinite' smoothness in the sense of the estimates on the kernel in \ref{e.CZ} and are therefore included in the result above.

The rest of the paper is dedicated to establishing a lower estimate of our commutators 
by means of product $\tmop{BMO}$. We are going to follow the iteration strategy in \cite{FL}, 
\cite{LT}, \cite{LPPW}. The one-dimensional case is very special: the Hilbert transform is both 
Calderon-Zygmund as well as half space Fourier projection operator. We have lost this feature 
in higher dimensions, but it motivates the use of CZOs close to projection operators, such as in 
\cite{LPPW}.

\section{Cone operators}\label{s.cones}

In dimension $d \geqslant 2$, a cone $C \subset$ $\mathbb{R}^d$ is given by
the data $( \xi, Q)$ where $\xi \in \mathbb{R}^d$ is the direction of the
cone and the cube $Q \subset \xi^{\bot}$ centered
at the origin is its aperture. The cone consists of all vectors $\theta$ that
take the form $( \theta_{\xi} \xi \nocomma, \theta^{\bot}) \tmop{where}
\theta_{\xi} = \theta . \xi$ and $\theta^{\bot} \in \theta_{\xi} Q.$ By
$\lambda C$ we mean the dilated cone with data $( \xi, \lambda Q)$.

Given a cone $C$, we consider its Fourier projection operator defined via
$\widehat{P_C} f =\tmmathbf{1}_C \hat{f} .$ Due to the fact that the apertures
are cubes, such operators are combinations of Fourier projections onto half
spaces and as such admit uniform $L^p$ bounds. For a given
cone $D$ we consider a smooth Calderon-Zygmund operator $T_D$ with a kernel $K_D
$ whose Fourier symbol $\widehat{K_D} \in C^{\infty}$ and satisfies the
estimate $\tmmathbf{1}_D \leqslant \widehat{K_D} \leqslant \tmmathbf{1}_{( 1 +
\tau) D}$.

\begin{remark}\label{r.lp}
The derivatives of the symbols $\widehat{K_D}$ increase
with the aperture of the cones. In the course of the proof it will be
important that the $L^p$ bounds of operators $T_D$ do not grow with the
aperture of the cones. We thank the special nature of the cone operators and their closeness to half plane projections for this fact. By a rotation argument, we may assume that the cone $D$
has direction $x_1$. There exists a smoothed symbol $m$ of the sort described,
so that higher derivatives in consecutive directions $x_2, \ldots ., x_n$ are
controlled independently of the aperture. In the remaining variable, $x_1$, the derivatives grow with the aperture, but we
control total variation of the derivatives in $x_2, \ldots ., x_n$. In doing so and carefully reading the Marcinkiewicz multiplier theorem, it provides us with $L^p$ bounds independent of the aperture. The
details are left to the interested reader. We refer to \cite{G} page 363 for a detailed statement of this theorem.
\end{remark}

\subsection{Selection of a Representative Class of Cones}

Following the idea in \cite{LPPW}, we select classes of cones that are going to 
give us a certain auxiliary lower bound. We felt the need to refine this process, 
which is necessary due to the fact that we consider more general classes of CZOs instead 
of just the class of Riesz transforms.

Let $b$ be our $\tmop{BMO}$ function that we normalize to have norm 1. \ Let
$U$ be the open set that gives us the supremum in the $\tmop{BMO}$ norm of $b$
and denote by $\mathcal{U}$ the collection of rectangles $R \subset U$. Let us
renormalize, by an appropriate dilation, the size of the set $\tmop{sh}
(\mathcal{U})$ to be comparable to 1. Let $\beta = P_{\mathcal{U}} b$, the
wavelet projection onto those wavelets adapted to rectangles in the class
$\mathcal{U}$.

Given a cone $C$ with data $(\xi, Q)$. We denote by $H_C$ the half plane
projection that corresponds to the direction $\xi$, the convolution operator
whose symbol is $\chi_{(0, \infty)}  (\xi \cdot \theta)$. Recall that $T_C$
denotes the CZO adapted to the cone and $P_C$ the Fourier projection
associated to the cone. Given a vector of cones $\vec{C} = (C_s)_{1 \le s \le
t}$ we denote by $H_{\vec{C}}, T_{\vec{C}}, P_{\vec{C}}$ their tensor
products.

\begin{lemma}\label{l.cone}
  Let $b$ be the set of all $\tmop{BMO}$ functions normalized as above. For all
  such $b$, let $U \nocomma, \mathcal{U}, \beta$ be as above. For any $\kappa
  > 0$ we can select a finite set of pairs $( \vec{D}, \vec{C})$of vectors of
  cones $\vec{D} = (D_s)_{1 \le s \le t}$ where $D_s \subset \mathbb{R}^{d_s}$
  with data $(\xi_s, Q_s)$ and $C_s \subset \mathbb{R}^{d_s}, 1 \le s \le t$
  with data $(\xi'_s, Q'_s)$ so that for each $\beta$ there is a pair $(
  \vec{D}, \vec{C})$ with the following properties.
  \begin{enumerate}
    \item $D_s \subset C_s$
    
    \item $\|T_{\vec{D}} \beta \|_2 \ge 4^{- t}$
    
    \item $\|(H_{\vec{D}} - T_{\vec{D}}) \beta \|_4 \le \kappa$
    
    \item $\|(H_{\vec{C}} - P_{\vec{C}}) |T_{\vec{D}} \beta |^2 \|_2 \le
    \kappa$
  \end{enumerate}
\end{lemma}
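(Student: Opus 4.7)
The plan is to build the finite family $\{(\vec{D},\vec{C})\}$ once and for all from a predetermined grid of cone directions and apertures (depending only on $\kappa$ and $\vec{d}$), and then for each normalized $\beta$ to use a Fourier-analytic pigeonhole to extract one pair meeting all four conditions. Two preliminary observations are in constant use: $\|\beta\|_{2}\sim 1$, from the normalization $\|b\|_{\tmop{BMO}}=1$ together with the dilation that arranges $|\tmop{sh}(\mathcal{U})|\sim 1$; and $\beta$ is uniformly controlled in $L^{p}$ for every $p<\infty$, by John--Nirenberg applied to $b$ together with the Meyer wavelet localization.

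For (1)--(3), I would cover each sphere $\mathbb{S}^{d_{s}-1}$ by finitely many caps of small angular radius $\eta=\eta(\kappa)$, with associated cones $D_{s,j}$ of aperture proportional to $\eta$. Because the thickened cones $(1+\tau)D_{s,j}$ have bounded overlap, Plancherel yields $\sum_{\vec{j}}\|T_{\vec{D}_{\vec{j}}}\beta\|_{2}^{2}\gtrsim \|\beta\|_{2}^{2}$; the number of $\vec{j}$ is a fixed finite quantity $N(\eta,\vec{d})$, and pigeonhole produces a choice $\vec{D}=\vec{D}_{\vec{j}}$ with $\|T_{\vec{D}}\beta\|_{2}\ge 4^{-t}$, giving (2). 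Condition (3) then follows because the symbol of $T_{\vec{D}}-H_{\vec{D}}$ lives in the half-space region outside the thickened cone; by taking $\eta$ small enough, a standard multiplier estimate combined with the uniform $L^{4}$ control of $\beta$ and the aperture-independent $L^{p}$ bounds for cone operators (Remark \ref{r.lp}) produces an $L^{4}$ error at most $\kappa$.

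For (4), I would choose $\vec{C}$ with the same directions as $\vec{D}$ but with aperture $\lambda Q_{s}$ for a large $\lambda=\lambda(\kappa)$ selected from a finite list; this enforces (1) automatically. The symbol $\mathbf{1}_{H_{\vec{C}}\setminus \vec{C}}$ of $H_{\vec{C}}-P_{\vec{C}}$ is supported in a thin ring near the equator of each half-space, of relative angular measure $O(1/\lambda)$ on $\mathbb{S}^{d_{s}-1}$. Writing $f=|T_{\vec{D}}\beta|^{2}$, Plancherel gives $\|(H_{\vec{C}}-P_{\vec{C}})f\|_{2}^{2}=\int_{H_{\vec{C}}\setminus \vec{C}}|\hat{f}|^{2}$, and the convolution structure $\hat{f}=\widehat{T_{\vec{D}}\beta}\ast\overline{\widehat{T_{\vec{D}}\beta}}$ provides enough angular regularity to convert the small angular measure of the ring into a small $L^{2}$ contribution, of order $\lambda^{-1/2}\|\beta\|_{4}^{2}$, which is made at most $\kappa$ by taking $\lambda$ large.

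The main obstacle is condition (4), since $f=|T_{\vec{D}}\beta|^{2}$ does not have Fourier support in any cone: the difference $(1+\tau)\vec{D}-(1+\tau)\vec{D}$ essentially fills the ambient space, so pure support arguments fail. The resolution is to replace support arguments by a quantitative estimate that exploits, on the one hand, the smoothing effect of convolution on the angular distribution of $\hat{f}$, and on the other hand, the freedom to enlarge the aperture of $\vec{C}$ within a finite grid so that the ring $H_{\vec{C}}\setminus \vec{C}$ becomes arbitrarily thin, with all constants uniform in $\beta$ thanks to the uniform $L^{4}$ bound on $\beta$ and Remark \ref{r.lp}.
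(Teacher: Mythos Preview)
Your argument has a genuine gap at step (3), and a second one at step (4).

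For (3): you build the cones $D_{s,j}$ with \emph{small} aperture (proportional to $\eta$) so that a finite cap-cover and pigeonhole can pick one with $\|T_{\vec D}\beta\|_2\ge 4^{-t}$. But then the symbol of $H_{\vec D}-T_{\vec D}$ is essentially $\mathbf 1_{H_{\vec D}\setminus (1+\tau)\vec D}$, which is supported on almost the entire half-space when the cone is thin; making $\eta$ smaller makes this region \emph{larger}, not smaller. Pigeonhole only tells you that a positive fraction of the $L^2$ mass of $\hat\beta$ sits in $\vec D$; it says nothing about the mass in $H_{\vec D}\setminus \vec D$, so there is no mechanism forcing $\|(H_{\vec D}-T_{\vec D})\beta\|_4$ to be small. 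The paper's proof goes in the opposite direction: the cones $D_s$ are chosen with \emph{large} aperture, so that $\mathbb P(D_s\cap\mathbb S^{d_s-1}\mid\mathbb S^{d_s-1})\ge \tfrac12-\eta$ and the difference region is a thin equatorial strip of relative measure $O(\eta)$. Then an averaging over random rotations gives $\mathbb E\|P_{\vec D^{\phi}}\beta\|_2^2\ge(\tfrac12-\eta)^t$ and $\mathbb E\|(H_{\vec D^{\phi}}-P_{\vec D^{\phi}})\beta\|_2^2\le \eta^t$; a union-of-events argument shows both the lower bound (2) and smallness of $(H-T)\beta$ in $L^2$ can be achieved simultaneously, and interpolation against the uniform $L^8$ bound upgrades this to $L^4$. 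Finiteness is obtained afterwards by noting the set of admissible directions is open and stable under small perturbation, so a fixed finite net suffices.

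For (4): the claim that the convolution structure of $\widehat{|T_{\vec D}\beta|^2}$ provides ``enough angular regularity'' to turn the small angular measure of the ring $H_{\vec C}\setminus \vec C$ into a small $L^2$ contribution is not justified and is false in general: nothing prevents $\hat f$ from concentrating near the equator for a specific $\beta$. The paper again uses an averaging argument: for fixed $\gamma=T_{\vec D}\beta$ one has the uniform bound $\||\gamma|^2\|_2\le K$, hence $\mathbb E_\phi\|(H_{\vec C^\phi}-P_{\vec C^\phi})|\gamma|^2\|_2\lesssim K\varsigma^t$, and Markov's inequality locates a direction $\xi'_s$ within a small circular cone about $\xi_s$ (so that $D_s\subset C_s$ is guaranteed by enlarging the aperture) for which (4) holds. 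Your deterministic ``take $\lambda$ large'' scheme, with $\vec C$ sharing the direction of $\vec D$, cannot reproduce this without an averaging step.
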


\begin{proof}
  We first select a finite collection of cones $D_s$. Let us for the moment fix
  $b$. Let $\eta$ be a small positive number to be determined later. It will be
  in relation with the aperture of the cones: given $\eta$, the aperture $Q_s$
  is chosen large enough so that
  \[ \mathbb{P}  (D_s \cap \mathbb{S}^{d_s - 1} | \nobracket \mathbb{S}^{d_s -
     1}) \ge \frac{1}{2} - \eta . \]
  We consider random rotations $D_s^{\phi_s}$ of $D_s$ and write
  $\vec{D}^{\phi}$ for component-wise independent rotation.
  
  Averaging the $L^2$ norms gives us
  \[ \mathbb{E} (\|P_{\vec{D}^{\phi}} \beta \|^2_2) = \mathbb{E} (
     \int_{\vec{D}^{\phi}} | \hat{\beta} (\xi) |^2 d \xi) \ge ( \frac{1}{2} -
     \eta)^t \]
  as well as
  \[ \mathbb{E} (\|(H_{\vec{D}^{\phi}} - P_{\vec{D}^{\phi}}) \beta \|^2_2) \le
     \eta^t . \]
  Notice that for all choices of $\phi$, we have
  \[ 0 \le \|T_{\vec{D}^{\phi}} \beta \|_2 \le 1 \]
  as well as
  \[ 0 \le \|(H_{\vec{D}^{\phi}} - T_{\vec{D}^{\phi}}) \beta \|_2 \le 1 \]
  Together, this provides us with the estimates
  \[ \mathbb{P}  (\|T_{\vec{D}^{\phi}} \beta \|_2 \ge 4^{- t}) \ge \frac{4^t 
     ( \frac{1}{2} - \eta)^t - 1}{4^t - 1} \]
  and
  \[ \mathbb{P}  (\|(H_{\vec{D}^{\phi}} - T_{\vec{D}^{\phi}}) \beta \|_2 \ge
     \eta^{- t}) \le \eta^{\frac{t}{2}} \]
  Since $\lim_{\eta \to 0}  \frac{4^t  ( \frac{1}{2} - \eta)^t - 1}{4^t - 1} =
  \frac{1}{2^t - 1}$ and $\lim_{\eta \to 0} 1 - \eta^{\frac{t}{2}} = 1$, the
  sum of the above probabilities exceeds 1 for small enough $\eta$. In this case we are sure to be able
  to select directions so that
  \[ \|T_{\vec{D}^{\phi}} \beta \|_2 \ge 4^{- t} \]
  and
  \[ \|(H_{\vec{D}^{\phi}} - T_{\vec{D}^{\phi}}) \beta \|_2 \le \eta^{- t / 2}.
  \]

  We have half plane projection operators $H_D$ and CZOs $T_D$ that have,
  according to remark \ref{r.lp} above, uniform $L^p$ bounds. Also remember that
  $\beta$ is normalized in $L^2$ as well as in $\tmop{BMO}$. We therefore have
  uniform $L^8$ bounds: $\|(H_{\vec{D}^{\phi}} - T_{\vec{D}^{\phi}}) \beta
  \|_8 \le K$ where the constant $K$ neither depends on the aperture nor the
  direction of the cones.
  
  By interpolation we get $\|(H_{\vec{D}^{\phi}} - T_{\vec{D}^{\phi}}) \beta
  \|_4 \lesssim \eta^{- t / 6}$. We choose $\eta$ small enough so that both
  the above inequalities hold as well as $\eta^{- t / 6} < \kappa$.
  
  We have seen that there exists a fixed $\eta$ so that for each $b$ the set
  $b( \eta) \subset \mathbb{S}^{d - 1}$ of admissible directions $\xi$ is
  not empty. Notice that $b ( \eta) \subset b ( \eta / 2)$. Furthermore, there
  exists $r ( \eta)$ so that the ball $B ( \xi, r ( \eta)) \cap
  \mathbb{S}^{d- 1} \subset b ( \eta / 2) $for all $\xi \in b (
  \eta) .$ So by increasing the aperture, a dense enough finite sample set of directions will therefore
  provide an admissible direction for all appropriately normalized
  $\tmop{BMO}$ functions $b$.

  We turn to the selection of cones $C_s$, keeping in mind that cones $D_s$
  have already been chosen. Due to uniform $L^4$ estimates of
  $T_{\vec{D}^{\phi}} $ we see that $\|| \gamma |^2 \|_2 \le K$ for some
  universal $K$. So, in particular, for any vector of cones $\vec{C}$, we have
  $\|(H_{\vec{C}} - P_{\vec{C}}) | \gamma |^2 \|_2 \le K$.
  
  Take $\varsigma < \eta / 2$ a small positive number. Choosing the aperture
  of the cones $C_s$ large enough so that
  \[ \mathbb{P}  (C_s \cap \mathbb{S}^{d_s - 1} | \nobracket \mathbb{S}^{d_s -
     1}) \ge \frac{1}{2} - \varsigma \]
  gives us the estimate
  \[ \mathbb{E} \|(H_{\vec{C}^{\phi}} - P_{\vec{C}^{\phi}}) |
     \gamma |^2 \|_2 \le K \varsigma^t \]
  Similarly to above,
  \[ \mathbb{P}  (\|(H_{\vec{C}^{\phi}} - P_{\vec{C}^{\phi}}) | \gamma |^2
     \|_2 \ge K \varsigma^{t / 2}) \le \varsigma^{t / 2} \]
 
  If $D_s = ( \xi_s,
  Q_s)$ let $E_{\xi_s}$ be the hyperplane perpendicular to $_{} \xi_s$ and
  $H_{\xi_s} \tmop{the} \tmop{corresponding} \tmop{half} \tmop{space}
  \tmop{that} \tmop{contains}$ $D_s$. Let $\alpha = \min \angle ( \xi_1,
  \xi_2) : \xi_1 \in D_s \nocomma, \xi_2 \in E_{\xi_s}$ where $\angle$ denotes
  the angle between vectors. Notice that $\alpha$ only depends upon $\eta$.
  Consider now the circular cone $A_{_{} \xi_s} = \{ \xi : \angle ( \xi,
  \xi_s) < \alpha / 4 \}$. There exists a fixed larger aperture $Q_{s}'$,
  only depending on $\alpha$ so that $( \xi, Q_s') \supset ( \xi_s,
  Q_s)$ whenever $\xi \in D_{\xi_s}$. We are free to choose $\varsigma$ small
  enough so that
  \[ \mathbb{P}  (A_{\xi_s} \cap \mathbb{S}^{d_s - 1} | \nobracket
     \mathbb{S}^{d_s - 1}) \ge \varsigma^{1 / 2} \]
  as well as $K \varsigma^{t / 2} < \kappa$. Since
  \[ \mathbb{P}  (\|(H_{\vec{C}^{\phi}} - P_{\vec{C}^{\phi}}) | \gamma |^2
     \|_2 \ge K \varsigma^{t / 2}) \le \varsigma^{t / 2} \]
  we are sure to find $C_s = ( \xi_s', Q_s')$ with the required properties.
  
  By slightly enlarging the aperture of cones $C_s$ and an argument similar to
  the one above, we obtain a finite collection of cones $C_s$ with the
  required properties.

\end{proof}

We form commutators using  arbitrary cones $C_s = ( \xi_s, Q_s)$.  Let
us define
\[ \|b\|_{\vec{Q}} = \sup \|[T_{C_1}, ... [T_{C_t}, M_b] ...]\|_{2 \to 2} \]
where the supremum is taken over all choices of cone transforms $T_{C_s} =
T_{( \xi_s, Q_s)}$ in which the direction $\xi_s$ varies and the aperture
of the cone is fixed to be $Q_s$ for each parameter $s$ separately. Here $T_{C_s}$ acts
in the $s$th variable. In \cite{LPPW} the following theorem was proven:

\begin{theorem}
  $\|b\|_{\vec{Q}} \sim \| b \|_{\tmop{BMO}}$ with constants depending upon
  the aperture of the cones.
\end{theorem}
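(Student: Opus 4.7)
The upper bound $\|b\|_{\vec Q} \lesssim \|b\|_{\tmop{BMO}}$ is immediate from Theorem \ref{t.upper}, since each $T_{C_s}$ is a smooth Calder\'on--Zygmund operator whose $L^2 \to L^2$ norm depends only on the aperture $Q_s$ by Remark \ref{r.lp}.

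For the lower bound, I would normalize $\|b\|_{\tmop{BMO}} = 1$, pick an open set $U$ of finite measure whose wavelet sum realizes the supremum in \eqref{e.BMOdef} up to a factor $\tfrac12$, and dilate so that $|U|\sim 1$. Writing $\mathcal U$ for the dyadic rectangles contained in $U$, set $\beta := P_{\mathcal U} b$, so that $\|\beta\|_2 \sim 1$. For a small parameter $\kappa>0$ to be chosen at the end, I invoke Lemma \ref{l.cone} to produce cone vectors $\vec D \subset \vec C$ satisfying conclusions (1)--(4), and put $\gamma := T_{\vec D}\beta$, so that $4^{-t} \le \|\gamma\|_2 \lesssim 1$. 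The plan is to evaluate the bilinear form
\[
\Lambda(f,g) := \bigl\langle [T_{C_1}, [\ldots [T_{C_t}, M_b] \ldots]]\, f,\, g\bigr\rangle
\]
on a judiciously chosen pair $(f,g)$ built from $\gamma$ and show $|\Lambda(f,g)| \gtrsim 4^{-2t}$ up to an $O(\kappa)$ error; since $\|f\|_2\|g\|_2 \lesssim 1$, this yields $\|b\|_{\vec Q} \gtrsim 1$ after choosing $\kappa$ small enough.

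Unfolding the $t$-fold commutator produces the standard alternating sum
\[
\Lambda(f,g) = \sum_{S \subseteq \{1,\ldots,t\}} (-1)^{|S|}\, \bigl\langle b,\, T^{S}_\ast f \cdot \overline{T^{S^c}_\ast g}\bigr\rangle,
\]
where $T^S_\ast$ denotes the appropriate composition of the cone operators $T_{C_s}$ and their adjoints for $s\in S$; the closure of the symbol families under complex conjugation guarantees that each $T_{C_s}^\ast$ is again an admissible cone operator for which Remark \ref{r.lp} applies. The cone selection in Lemma \ref{l.cone} is designed so that all but one of these $2^t$ terms cancel to leading order: since $\gamma$ has Fourier support in (a slight enlargement of) $\vec D \subset \vec C$, we have $T_{\vec C}\gamma = \gamma$, and an analogous identity holds for a test function $g$ with Fourier support in the reflected cone. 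Property (4) allows the replacement of $T_{\vec C}$ by the half-plane projection $H_{\vec C}$ inside the quadratic expressions involving $|\gamma|^2$, at $L^2$-cost at most $\kappa$; these half-plane projections then act as the identity on the Fourier-localized factors, inducing the telescoping. What survives is the single principal term of the form $\langle b, |\gamma|^2 \rangle$, and by property (3) combined with H\"older's inequality (with constants kept aperture-independent through Remark \ref{r.lp}), this is comparable to $\|\gamma\|_2^2 \ge 4^{-2t}$, up to an additional $O(\kappa)$ error coming from the replacement $T_{\vec D}\beta \rightsquigarrow H_{\vec D}\beta$.

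The hardest part will be the combinatorial bookkeeping of the $2^t$ unfolded terms and the careful propagation of $O(\kappa)$ errors through iterated H\"older estimates; the aperture-independent $L^p$ bounds of Remark \ref{r.lp} are essential so that the error constants do not blow up with the cone apertures as one iterates across the $t$ parameters. Once that bookkeeping is in place, collecting constants and choosing $\kappa$ small completes the lower bound.
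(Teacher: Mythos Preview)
The paper does not prove this theorem; it simply cites it from \cite{LPPW}. What the paper does sketch is the closely related lemma that follows (showing the commutator is large on the specific test function $\bar\gamma$), and the full LPPW argument is reproduced in Section~4 for general CZO families. Measured against that argument, your outline has two genuine gaps.

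First, your principal-term analysis does not work as written. You land on $\langle b,|\gamma|^2\rangle$ and assert it is comparable to $\|\gamma\|_2^2$; there is no reason for that, since $b$ is merely $\tmop{BMO}$-normalized, not close to the constant $1$. The correct mechanism (see the paper's sketch) is to apply the commutator with symbol $\beta=P_{\mathcal U}b$ to $\bar\gamma$ and measure the output in $L^2$. Because $\widehat{\bar\gamma}$ is supported in $-\vec D$, every $T_{C_s}$ that hits $\bar\gamma$ vanishes, leaving only $T_{\vec C}(\beta\bar\gamma)$. Writing $\beta=\gamma+(H_{\vec D}-T_{\vec D})\beta+(I-H_{\vec D})\beta$, the main piece is $T_{\vec C}|\gamma|^2$; one then uses property~(4) to pass to $H_{\vec C}|\gamma|^2$, the symmetry of $\widehat{|\gamma|^2}$ to get $\|H_{\vec C}|\gamma|^2\|_2\gtrsim\|\gamma\|_4^2$, and finally Littlewood--Paley together with $|\tmop{sh}(\mathcal U)|\sim1$ to obtain $\|\gamma\|_4^2\gtrsim\|\gamma\|_2^2$. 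Your bilinear pairing $\langle b,|\gamma|^2\rangle$ does not access this $L^4$ lower bound.

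Second, and more seriously, you never pass from $\beta$ to the full symbol $b$. This is where the real multi-parameter difficulty lies. The LPPW proof (and Section~4 here) first reduces, by induction on the number of parameters via the $\tmop{BMO}_{-1}$ norm, to symbols with $\|b\|_{\tmop{BMO}_{-1}}<\delta_{-1}$; it then splits the remaining rectangles into a Journ\'e collection $\mathcal V$ and a distant collection $\mathcal W$, controls the $\mathcal V$ contribution by $\delta_J^{1/4}$ using Journ\'e's lemma, and controls the $\mathcal W$ contribution by $\delta_{-1}$ through delicate wavelet and paraproduct estimates. None of this appears in your outline, so as it stands you have at best bounded the commutator with symbol $\beta$, not with $b$.
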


We are going to need information that is somewhat more specific. It is
valuable to us to know for which test function, depending on the symbol $b$,
the commutator becomes large. 

\begin{lemma}
  If $\gamma = T_{\vec{D}} \beta$ with cones $\vec{D}, \vec{C}$ chosen as in
  the lemma, then
  \[ \|[T_{C_1}, ... [T_{C_t}, M_b] ...] \bar{\gamma} \|_2 \gtrsim 1. \]
\end{lemma}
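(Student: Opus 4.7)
The plan is to collapse the iterated commutator to a single composition $T_{\vec C}(b\bar\gamma)$ via Fourier support, and then lower-bound this via a Fourier projection, exploiting the structural properties of $\gamma$ from Lemma~\ref{l.cone}. Because $\beta$ is real, $\gamma = T_{\vec D}\beta$ has Fourier transform supported in $(1+\tau)\vec D$, and $\bar\gamma$ has Fourier transform supported in $-(1+\tau)\vec D$. For each $s$, the symbol of $T_{C_s}$ is supported in $(1+\tau)C_s$, which is disjoint from $-(1+\tau)D_s$ since $D_s \subset C_s$ and each cone sits in a strict open half-space. Hence $T_{C_s}\bar\gamma = 0$ in every coordinate, and expanding the nested brackets inductively, every summand containing an inner factor $T_{C_s}\bar\gamma$ vanishes, leaving
\[
  [T_{C_1},\ldots,[T_{C_t},M_b]\ldots]\bar\gamma = T_{\vec C}(b\bar\gamma).
\]

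Next, since $|\widehat{K_{\vec C}}|^2 \ge \mathbf{1}_{\vec C}$ pointwise, one has $\|T_{\vec C}(b\bar\gamma)\|_2 \ge \|P_{\vec C}(b\bar\gamma)\|_2$. I would split $b = \beta + (b-\beta)$ and treat $P_{\vec C}(\beta\bar\gamma)$ as the principal term. Using reality of $\beta$ (which forces $\hat\beta$ to be conjugate-symmetric) together with the Fourier-support description of $\bar\gamma$, the dominant contribution to $P_{\vec C}(\beta\bar\gamma)$ comes from the part of $\beta$ whose Fourier transform lives in the product of positive half-spaces containing $\vec D$; by property~(2) of Lemma~\ref{l.cone} this piece is close to $\gamma$ in $L^4$, so $P_{\vec C}(\beta\bar\gamma) \approx P_{\vec C}|\gamma|^2$. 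Property~(4) then gives $P_{\vec C}|\gamma|^2 \approx H_{\vec C}|\gamma|^2$, whose $L^2$ norm equals $\tfrac{1}{\sqrt{2^t}}\||\gamma|^2\|_2 = \tfrac{1}{\sqrt{2^t}}\|\gamma\|_4^2$ by the conjugate symmetry of the Fourier transform of the real function $|\gamma|^2$. Meyer-wavelet concentration of $\beta$ on the renormalized shadow $\mathrm{sh}(\mathcal U)$ of measure $\sim 1$, together with the size estimate $\|\gamma\|_2 \ge 4^{-t}$, yields $\|\gamma\|_4 \gtrsim 1$ by a reverse-Hölder type argument, and therefore $\|P_{\vec C}(\beta\bar\gamma)\|_2 \gtrsim 1$.

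The residual term $\|P_{\vec C}((b-\beta)\bar\gamma)\|_2$, together with the $O(\kappa)$ approximation errors from properties~(2), (3), (4) of Lemma~\ref{l.cone}, is to be controlled by the wavelet-support and paraproduct estimates ``black-boxed'' from \cite{LPPW}: these exploit the dyadic separation between the wavelet support of $b-\beta$ (on rectangles outside $\mathcal U$) and the spatial concentration of $\bar\gamma$ near $U$, with Journ\'e's lemma and the $\mathrm{BMO}_{-1}$/product-BMO comparison entering implicitly. Choosing $\kappa$ sufficiently small in Lemma~\ref{l.cone} then makes these errors strictly dominated by the principal lower bound of the previous step. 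The hardest part will be this final step: three simultaneous approximation errors must be combined with a delicate paraproduct bound, and the quantitative balance relies on careful tuning of the aperture and direction parameters in Lemma~\ref{l.cone}.
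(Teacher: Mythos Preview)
Your approach is essentially the paper's own: collapse the iterated commutator to $T_{\vec C}(b\bar\gamma)$ using that $T_{C_s}\bar\gamma=0$, split $b=\beta+(b-\beta)$, reduce the $\beta$-piece to $T_{\vec C}(\gamma\bar\gamma)$ via the decomposition $\beta=\gamma+(H-T_{\vec D})\beta+(I-H)\beta$, pass to $H_{\vec C}\lvert\gamma\rvert^2$ via property~(4), and finish with a Littlewood--Paley lower bound on $\|\gamma\|_4$; the residual $b-\beta$ is then split into the Journ\'e-controlled piece $P_{\mathcal V}b$ and the $P_{\mathcal W}b$ piece black-boxed from \cite{LPPW}. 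Two small corrections: the property you cite as ``(2)'' for $\|(H_{\vec D}-T_{\vec D})\beta\|_4\le\kappa$ is property~(3); and the identity $\|H_{\vec C}\lvert\gamma\rvert^2\|_2=2^{-t/2}\|\lvert\gamma\rvert^2\|_2$ does \emph{not} follow from the reality of $\lvert\gamma\rvert^2$ alone when $t>1$, since conjugate symmetry $\widehat{\lvert\gamma\rvert^2}(-\xi)=\overline{\widehat{\lvert\gamma\rvert^2}(\xi)}$ only pairs the orthant $H_{\vec C}$ with its antipode, not with the remaining $2^t-2$ mixed orthants --- the paper states only $\gtrsim$ here and defers the justification to \cite{LPPW}, Section~7.
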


The proof of a similar estimate is implicit in \cite{LPPW}, section 7. Although the cones in
our text have somewhat different properties ($D_s$ and $C_s$ do not necessarily share the same direction), 
the pairs $(D_s,C_s)$  were chosen to enable the use of the proof in \cite{LPPW}. We sketch the part of the proof that illustrates the special use of the cone operators.

Let $U$ be the supremal set in the definition of $\tmop{BMO}$ and  $\mathcal{U}$ the corresponding collection of dyadic rectangles with its shadow $sh(\mathcal{U})$. Journ\'e's lemma provides us with a slightly larger set $V$. Let $\mathcal{V}=\{R:R\subset V, R \not\subset sh(\mathcal{U})\}$. Let $\mathcal{W}$ denote the rest of the dyadic rectangles.

We first observe that with $\beta=\mathcal{P}_{\mathcal{U}}b$  and $\gamma= T_{\vec{D}}\beta$, we have $\|[T_{C_1},...[T_{C_t},M_{\beta}]...]\bar{\gamma}\|_2 \gtrsim 1$. Observe that the only non-zero term in this commutator is $T_{C_1}...T_{C_t}(\mathcal{P}_{\mathcal{U}}b)\bar{\gamma}$ since any cone operator falling on $\bar{\gamma}$ is zero. 
Consider now the splitting 
$$T_{\vec{C}}(\gamma+(H_{\vec{C}}-T_{\vec{D}})\beta + (I-H_{\vec{C}}) \beta )\bar{\gamma}.$$
 The last term is zero since $(I-H_{\vec{C}}) \beta $ and $ \bar{\gamma}$ are supported on the same half space away from the cones $\vec{C}$. The second term is small due to the choice of the cone in lemma (\ref{l.cone}). The first term is large and explains the motivation using cone transforms:
$$\norm  T_{\vec{C}}
[ \gamma  \cdot \overline\gamma  ] .2.+\kappa 
\ge \norm 
H _{\vec{C}} 
[\gamma  \cdot \overline\gamma  ] .2.
\gtrsim 
\norm  \overline {\gamma } \cdot \gamma .2.  
= \norm \gamma .4.^2 
 \gtrsim 1\,. 
$$
This follows as the Fourier transform of $ \overline {\gamma } \cdot \gamma $ 
is symmetric with respect to the half planes determined by the cones; 
the last inequality uses the Littlewood--Paley inequalities.

Next, we will see that  $\|[T_{C_1},...[T_{C_t},M_{\mathcal{P}_{\mathcal{V}}b}]...]\bar{\gamma}\|_2 \lesssim \delta_J^{1/4}$.  It is easy to see that
$$
\norm [ \operatorname T _{C_1}, \cdots [T_{C_t},\operatorname M _{ P_{\mathcal V}b}
] \cdots ]\overline\gamma .2.
\lesssim\norm\operatorname P_{\mathcal V}b.4.\norm \gamma  .4.
\lesssim \norm\operatorname P_{\mathcal V}b.4.\,,
$$
where the implied constant depends upon the $ L^4$ 
norms of the Cone transforms. 
But, by Journ\'e's lemma, we have that 
$$
\norm\operatorname P_{\mathcal V}b.2.\leq\delta_J^{1/2}\, ,
\qquad\norm\operatorname P_{\mathcal V}b.\textup{BMO}.\leq 1\,,.
$$
Together they imply
$$
\norm\operatorname P_{\mathcal V}b.4.\leq\delta_J^{1/4}.
$$

For the technical estimate of the last term as well as a more detailed exposition, 
we refer to \cite{LPPW} section 7, proof of 7.9.

We gather the information and are left with the following:

\begin{theorem}
  For each parameter $s$ there exists a {\it finite} collection $\mathcal{C}_s$ of
  cones $C_{s, k_s} = ( \xi_{k_s}, Q_s) \nocomma$ with $1 \leqslant k_s
  \leqslant n_s$ of fixed aperture $Q_s$ so that
  \[ \|b\|_{BMO} \lesssim \sup \|[T_{C_{1, k_1}}, ... [T_{C_{t, k_t}}, M_b]
     ...]\|_{2 \to 2} \lesssim \|b\|_{BMO} \]
  for all BMO functions b. Here the supremum runs over all $C_{s, k_s} \in
  \mathcal{C}_s$.
\end{theorem}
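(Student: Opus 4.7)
The plan is to assemble the theorem from the ingredients already in place: the upper bound is immediate, and the lower bound is essentially a packaging of Lemma \ref{l.cone} together with the unnamed lemma that bounds the commutator applied to the test function $\bar\gamma$ from below. No new analytic estimates are required; the work is bookkeeping.

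For the upper estimate, I would simply apply Theorem \ref{t.upper} with $K_s$ chosen to be the smooth Calder\'on--Zygmund kernel of $T_{C_{s,k_s}}$ for any tuple of cones in the (yet to be specified) finite family. Since these cone operators are smooth CZOs with $L^p$ norms controlled uniformly in the aperture (Remark \ref{r.lp}), the implied constant will depend only on $\vec d$ and the fixed apertures $Q_s$. Nothing else is needed on this side.

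For the lower estimate, I would fix $b$ with $\|b\|_{\tmop{BMO}}=1$, let $U$ realize the supremum in the Chang--Fefferman definition, rescale so that $|\tmop{sh}(\mathcal U)|\sim 1$, and set $\beta = P_{\mathcal U} b$. Orthonormality of the wavelet basis and the definition of $\tmop{BMO}$ give
\[
\|\beta\|_2^2 \;=\; \sum_{R\in\mathcal U}\sum_{\vec\varepsilon} |\langle b, w_R^{\vec\varepsilon}\rangle|^2 \;\le\; |U|\,\|b\|_{\tmop{BMO}}^2 \;\lesssim\; 1.
\]
I then invoke Lemma \ref{l.cone} with $\kappa$ chosen small enough for the unnamed lemma to apply, obtaining a \emph{finite} family $\mathcal F$ of pairs $(\vec D,\vec C)$ such that our particular $b$ admits some pair in $\mathcal F$ satisfying properties (1)--(4). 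With $\gamma = T_{\vec D}\beta$, Remark \ref{r.lp} gives $\|\gamma\|_2 \lesssim \|\beta\|_2 \lesssim 1$ with a constant independent of the cone apertures, while the unnamed lemma gives
\[
\bigl\|[T_{C_1},\ldots[T_{C_t},M_b]\cdots]\bar\gamma\bigr\|_2 \;\gtrsim\; 1.
\]
Dividing yields $\|[T_{C_1},\ldots[T_{C_t},M_b]\cdots]\|_{2\to 2}\gtrsim 1 = \|b\|_{\tmop{BMO}}$ for some tuple of cones drawn from $\mathcal F$.

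To extract the per-parameter finite collections $\mathcal C_s$ promised by the theorem, I would set $\mathcal C_s$ to be the set of $s$-th components of the vectors $\vec C$ as $(\vec D,\vec C)$ ranges over $\mathcal F$; this is finite because $\mathcal F$ is. The supremum in the theorem, which is taken over independent choices in $\mathcal C_1\times\cdots\times\mathcal C_t$, dominates in particular the supremum over the matched tuples coming from $\mathcal F$, so the desired lower bound follows for every normalized $b$. The one step I expect to deserve the most care is precisely the uniformity embedded in Lemma \ref{l.cone}: the argument there produces, for each $b$, an open set of admissible directions on $\mathbb S^{d_s-1}$ containing a ball of radius $r(\eta)$ around any admissible direction, so a sufficiently dense finite net on the product of spheres (combined with a single sufficiently large aperture) hits an admissible pair for \emph{every} normalized $b$ simultaneously. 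Once this uniform-net argument is in hand, the theorem follows by assembly.
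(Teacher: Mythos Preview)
Your proposal is correct and matches the paper's approach. The paper presents this theorem without a separate proof, simply writing ``We gather the information and are left with the following''; the assembly you describe---upper bound from Theorem~\ref{t.upper}, lower bound from Lemma~\ref{l.cone} combined with the unnamed lemma, and extraction of the finite per-parameter collections $\mathcal C_s$ from the finite family of pairs produced in Lemma~\ref{l.cone}---is exactly that gathering made explicit.
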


It will be essential for us to approximate symbols of cone operators using
polynomials in members of our given collections of symbols.

\subsection{Approximation of Cones via the Family $\Theta$}\label{l.approximation}

For a fixed parameter, given our family $\Theta$, we wish to approximate the symbol of cone
projection operators by means of polynomials in $\theta_i$. For technical
reasons, we need a very good approximation that controls also the supremum
norm of derivatives of the symbols, say of order $d \nosymbol \nosymbol .$
Nachbin's beautiful theorem \cite{Na} allows us, under certain conditions on the family,
to do so. We state it in the form we are going to need it.

\begin{theorem}
  Let $\mathfrak{M}$ be a compact smooth manifold. Let $B$ be a closed real
  subalgebra of $A \nocomma = ( C^m_{} ( \mathfrak{M}) \nocomma, \tau_m)$
  where  $\tau_m $ is the topology induced by the norm of uniform convergence
  in $C^m \nocomma$. Then $B = A$ if and only if $B$ contains the function
  $1$, $\forall x \neq y \in \mathfrak{M} \exists f \in B$ such that $f ( x)
  \neq f ( y)$ and for every $x \in \mathfrak{M}$ and $0 \neq v \in T_x (
  \mathfrak{M})$ there exists $f \in B$such that $\tmop{df} ( x) ( v) \neq 0$.
\end{theorem}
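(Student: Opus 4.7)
The plan is to establish both directions of the iff. The forward direction (necessity) is elementary: if $B = A = C^m(\mathfrak{M})$, then constants lie in $A$, bump functions cut off to any two distinct points witness separation, and ordinary coordinate charts combined with cutoffs produce functions with any prescribed non-zero differential. I would dispose of this in a few lines.

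For sufficiency, I would follow Nachbin's original strategy, broken into three stages. First, a \emph{local coordinate stage}: fix $x_0 \in \mathfrak{M}$ and set $n = \dim \mathfrak{M}$. The tangential derivative hypothesis, combined with finite-dimensional linear algebra, lets me choose $g_1, \ldots, g_n \in B$ whose differentials $dg_j(x_0)$ are a basis of $T^*_{x_0}\mathfrak{M}$. By the inverse function theorem, $G = (g_1, \ldots, g_n)$ is a $C^m$-diffeomorphism from an open neighborhood $U_{x_0}$ onto an open subset of $\mathbb{R}^n$.

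Second, a \emph{local approximation stage}: given $f \in C^m(\mathfrak{M})$, write $f|_{U_{x_0}} = F \circ G$ with $F \in C^m(G(U_{x_0}))$, and apply the multivariable Weierstrass approximation theorem in the $C^m$ norm on compact sets to obtain polynomials $P_\varepsilon$ with $\|F - P_\varepsilon\|_{C^m} < \varepsilon$ on a compact subset. Since $B$ is a real algebra containing $1$, the composition $P_\varepsilon(g_1, \ldots, g_n)$ lies in $B$ and $C^m$-approximates $f$ on compact neighborhoods of $x_0$. Combined with the point separation hypothesis, one also uses the classical Stone--Weierstrass theorem to verify $C^0$-density on $\mathfrak{M}$, which is a useful auxiliary fact.

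The \emph{gluing stage} is the main obstacle, and is where Nachbin's argument becomes delicate. Compactness produces a finite cover $U_{x_1}, \ldots, U_{x_N}$ with local approximants $h_i \in B$ for $f$ on each $U_{x_i}$, but the standard partition of unity $\{\phi_i\}$ need not lie in $B$, so one cannot just form $\sum \phi_i h_i$ directly. The resolution is to build \emph{approximate} cutoffs inside $B$: for each $i$, apply Step 2 to the smooth cutoff $\phi_i$ (restricted to the chart $U_{x_i}$, where it is a $C^m$ function of the local coordinates) to obtain $\tilde{\phi}_i \in B$ that is $C^m$-close to $\phi_i$ on $U_{x_i}$. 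One then forms $\sum_i \tilde\phi_i h_i \in B$ and estimates the error. The error does not vanish outside $U_{x_i}$ for each term, so the argument is iterative: the global error after one round is again in $C^m(\mathfrak{M})$ but smaller, and the procedure is repeated. Convergence is ensured by the closedness of $B$ in the $C^m$ topology. The technical heart is a careful bookkeeping of derivative estimates through products and compositions; I would defer those estimates to Nachbin's original proof in \cite{Na} and present the structural argument only.
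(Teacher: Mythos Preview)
The paper does not prove this theorem at all: it is quoted as Nachbin's theorem and cited to \cite{Na}, then used as a black box in the subsequent approximation lemma. So there is nothing to compare your proposal against in the paper itself; you are effectively volunteering a proof where the authors chose to cite.

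That said, your sketch has a genuine gap in the gluing stage. The iterative scheme you describe is circular. Step~2 produces $\tilde\phi_i \in B$ that is $C^m$-close to $\phi_i$ only on a compact subset of the chart $U_{x_i}$; outside that set you have no control over $\tilde\phi_i$ whatsoever, since it is a polynomial in the global functions $g_1,\dots,g_n \in B$ and those may be large away from $x_i$. Likewise the local approximant $h_i$ is a polynomial in the same coordinates and can be enormous off $U_{x_i}$. Hence in the error
\[
f - \sum_i \tilde\phi_i h_i \;=\; \sum_i \phi_i (f-h_i) \;+\; \sum_i (\phi_i - \tilde\phi_i) h_i,
\]
the second sum is uncontrolled in $C^m(\mathfrak{M})$, and there is no reason the global error is smaller than $\|f\|_{C^m}$. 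Iterating does not help: you would need exactly the global $C^m$ approximation of cutoffs that you are trying to prove. Deferring ``the estimates'' to \cite{Na} does not save this, because the problem is structural, not a matter of bookkeeping constants.

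The standard repair, and the argument usually attributed to Nachbin, bypasses partitions of unity entirely. Combine your local coordinate functions at finitely many points (compactness) with finitely many additional point-separating functions from $B$ to obtain a $C^m$ \emph{embedding} $\Phi:\mathfrak{M}\to\mathbb{R}^N$ all of whose components lie in $B$. Any $f\in C^m(\mathfrak{M})$ extends to a $C^m$ function on a tubular neighborhood of $\Phi(\mathfrak{M})$; approximate that extension by a polynomial $P$ in $C^m$ on a compact neighborhood, and then $P\circ\Phi\in B$ approximates $f$ globally in $C^m$. This is the step you should replace your gluing stage with.
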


It is not hard to check that under the additional assumtion that $B$ be closed
under complex conjugation, there is a complex version.

\begin{lemma}
  For a given $d$ dimensional pair of cones $D$ and $C$ as in lemma (\ref{l.cone}), let
  $H_{- \xi_C}, H_{- \xi_D}$ denote the opposing half spaces, respectively.
  Choose a function $h_{C, D} \in C^d ( \mathbb{S}^{d - 1})$ with values between 0 and
  1 such that
  \begin{itemize}
    \item $h_{C,D} ( \xi) = 1 \forall \xi \in C$
    
    \item $h_{C,D} ( \xi) = 0 \forall \xi \in H_{- \xi_C} \cup H_{- \xi_D}$
  \end{itemize}
  \ Given any small $\epsilon > 0$, there exists an operator $F_{D, C}$ with
  symbol $v_{D, C}$, \ that is a polynomial in $\theta_{} \in \Theta_{}$ so
  that $\| v_{D, C} - h_{D, C} \|_{\tau_d} < \epsilon$, where $\| .
  \|_{\tau_d}$ is the norm of uniform convergence in $C^d$.We have universal
  $L^p$ estimates for the associated kernel operators $F_{D, C} : \|F_{D, C}
  \|_p \lesssim K_p$ where this constant is independent of the choice of the
  cone and universal for small $\epsilon$.
\end{lemma}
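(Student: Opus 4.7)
The strategy is to combine Nachbin's theorem (density in $C^d$) with the H\"ormander--Mihlin multiplier theorem (passage from a $C^d$ bound on the sphere to an $L^p$ bound on $\mathbb{R}^d$).

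First I would apply Nachbin's theorem to the closed real subalgebra $B\subset C^d(\mathbb{S}^{d-1})$ generated by $\Theta$ together with the constant function $1$, equipped with the $\tau_d$ topology. The two bullet conditions imposed on $\Theta$ in Section~1 --- full point separation on $\mathbb{S}^{d-1}$, and existence of a non-trivial tangential derivative at every point in every tangent direction --- are exactly the hypotheses of Nachbin's theorem. In the complex-valued setting the additional assumption that $\Theta$ be closed under complex conjugation reduces matters to the real version applied to real and imaginary parts, as the excerpt already remarks. Hence $B = C^d(\mathbb{S}^{d-1})$. Since $h_{C,D}\in C^d(\mathbb{S}^{d-1})$, I can then extract a polynomial $v_{D,C}$ in members of $\Theta$ with $\|v_{D,C}-h_{C,D}\|_{\tau_d}<\epsilon$. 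This handles the approximation part of the claim.

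Next I would convert this $C^d$ approximation into the desired uniform $L^p$ bound. Every $\theta\in\Theta$ is homogeneous of degree $0$ on $\mathbb{R}^d\setminus\{0\}$, so $v_{D,C}$ inherits this homogeneity and therefore
\[
|\partial^\alpha v_{D,C}(\xi)|\;\lesssim\; \bigl(\|h_{C,D}\|_{C^d}+\epsilon\bigr)\,|\xi|^{-|\alpha|},\qquad |\alpha|\le d,
\]
on $\mathbb{R}^d\setminus\{0\}$. This is exactly a H\"ormander--Mihlin condition, so the H\"ormander--Mihlin multiplier theorem gives $\|F_{D,C}\|_p\lesssim K_p$ for all $1<p<\infty$, with $K_p$ depending only on $p$ and on $\|h_{C,D}\|_{C^d}+\epsilon$.

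For universality of the constant, recall that in the preceding theorem one only needs the \emph{finite} collection of pairs $(D,C)$ produced in Lemma~\ref{l.cone}. One may build each cutoff $h_{C,D}$ from a single template function on the sphere composed with the cone data, so that $\|h_{C,D}\|_{C^d}$ is bounded uniformly over this finite family. Coupled with the uniformity in small $\epsilon$ already built into the estimate, this gives the claimed universal $K_p$. The only step that requires real care is matching Nachbin's hypotheses to the bullet conditions on $\Theta$ (in particular the tangential-derivative condition) and then using homogeneity to pass from a $C^d$ bound on the sphere to a H\"ormander--Mihlin bound on $\mathbb{R}^d\setminus\{0\}$; everything else is routine multiplier theory.
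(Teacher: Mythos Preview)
Your proposal is correct and follows essentially the same route as the paper: Nachbin's theorem (after adjoining the constant $1$, and conjugates in the complex case) for the $C^d$ approximation on the sphere, then a standard multiplier theorem exploiting that $C^d$ control together with degree--zero homogeneity for the uniform $L^p$ bounds. The only difference is one of emphasis: the paper spends its proof justifying why enlarging $\Theta$ by $1$ and by $\bar\Theta$ (when kernels are real) does not affect the commutator characterization of $\mathrm{BMO}$, whereas you are more explicit about the passage from the $\tau_d$ bound to the H\"ormander--Mihlin hypothesis.
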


\begin{proof}

  Thanks to our assumptions, the part concerning the approximations is almost
  clear. Just observe that we may add the identity operator $I$ with
  multiplier $1$ to our collection. That is the collection $\Theta$
  characterizes $\tmop{BMO}$ if and only if $\Theta \cup \{ 1 \}$ does. In the
  case that the kernels are real valued, we did not assume that $\Theta$ be
  closed under complex conjugation. In this case, consider $\Theta \cup
  \bar{\Theta} $ characterizes $\tmop{BMO}$ if and only if $\Theta$ does.
  Observe that if $T_{\theta}$ denotes the CZO associated to the symbol
  $\theta$, then $T_{\theta}^{\ast} = T_{\bar{\theta}}$. Observe also
  that $[ T, b] = [ T^{\ast}, \bar{b}]^{\ast}$. If the kernel $K ( x)$ of $T$
  is real, then $K ( - x)$ is the kernel of $T^{\ast}$. It is easy to verify
  that
  \[ [ T_1, [ T_2^{\ast}, b]] f = [ T_1 [ T_2, b^{( \cdot, -)}]] f^{( \cdot,
     -)} . \]
  Here $f^{( \cdot, -)} ( x, y) = f ( x, - y)$ so $f$ has a sign change in the
  second set of variables. Its obvious generalization holds when more iterates
  and adjoints are present. The $\tmop{BMO}$ and $L^2$ norms are preserved
  under these reflections.
  
  It remains the important point of universal $L^p \tmop{estimates} .$  Thanks to the control on the 
  derivatives granted to
  us by Nachbin's theorem, we may apply a standard multiplier theorem \cite{G} page to obtain uniform $L^p$ bounds. 
  \end{proof}

\section{Lower bound CZO}\label{s.lowerbound}

We induct on the number $t$ of parameters, that is the number of coordinates in $\vec{d} = ( d_1, \ldots .,
d_t)$.We assume that $d_s \geqslant 2$ for all $s$. The case when $d_s = 1$
for some $s$ reduces our choices of admissible operators to the Hilbert
transform. This case is easier and merely complicates notation for us.

The base case $t = 1$ of our induction argument is stronger than what we need
and a theorem by Li:

\begin{theorem}
Let $\mathcal{T}$ be a collection of CZOs, where the following restriction 
is  imposed: the symbols of the $T_i\in \mathcal{T}$ satisfy $\sum | \theta_{i} ( x) -
  \theta_{i} ( - x) | \neq 0$ for all  $x \in \mathbb{S}^{d - 1}$.

  In the case of $t = 1$ for all $d \geqslant 2$ and symbols $b$ on
  $\mathbb{R}^d$ we have
  \[ \| b \|_{\tmop{BMO}} \lesssim \sup_{1 \leqslant k \leqslant n} \| [ M_b,
     T_k] \|_{2 \rightarrow 2} . \]
  Here \ $T_{k}$ denotes the $k$th choice of CZO in the family
  $\mathcal{T}$.

\end{theorem}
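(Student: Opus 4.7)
The plan is to establish this base case through duality and a weak factorization of $H^1(\mathbb{R}^d)$, in the spirit of Uchiyama \cite{Ufeffstein} and Li \cite{li}. By Fefferman's one-parameter duality $\text{BMO} = (H^1)^*$, it suffices to decompose every $f$ in a dense subclass of $H^1$ as $f = \sum_{k,j} \Pi_k(\phi_{k,j}, \psi_{k,j})$, where $\Pi_k$ is the bilinear form dual to $[T_k, M_b]$, with $\sum_{k,j} \|\phi_{k,j}\|_2 \|\psi_{k,j}\|_2 \lesssim \|f\|_{H^1}$. Testing such a decomposition against $b$ and using the identity $\langle b, \Pi_k(\phi, \psi)\rangle = \langle [T_k, M_b]\phi, \psi\rangle$ yields $|\langle b, f\rangle| \lesssim \sup_k \|[T_k, M_b]\|_{2 \to 2} \cdot \|f\|_{H^1}$, which is exactly the desired lower bound by taking the supremum over $f$ in the unit ball of $H^1$.

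The heart of the argument is a single-scale atomic reduction: for each $H^1$-atom $a$ supported on a cube $Q$, produce $\phi, \psi \in L^2$ with $\|\phi\|_2 \|\psi\|_2 \lesssim 1$ and an index $k$ so that the residual $a - \Pi_k(\phi, \psi)$ has $H^1$-norm at most $\tfrac{1}{2}\|a\|_{H^1}$. Iterating this one-step reduction and summing the resulting geometric series produces the full factorization. The hypothesis $\sum_i |\theta_i(x) - \theta_i(-x)| \neq 0$ on $\mathbb{S}^{d-1}$ is precisely what enables this step: by compactness of the sphere, there is a uniform lower bound $\max_i |\theta_i(\xi) - \theta_i(-\xi)| \geq c_0 > 0$ for every $\xi$, so in every direction some $T_i$ carries a non-trivial odd Fourier part and therefore behaves, on the corresponding conical frequency region, like a smoothed half-space Fourier projection. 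This is exactly enough information to reproduce an atom up to a geometrically decaying residual.

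The main obstacle is the one-step reduction itself. Given $a$ supported on $Q$, one takes $\phi$ to be an $L^2$-normalized smooth bump adapted to $Q$, chooses the index $k$ and a direction based on the Fourier content of $a$, and constructs $\psi$ so that $\Pi_k(\phi, \psi)$ captures the bulk of $a$. The error must then split into a localized tail absorbed into the $L^2$ norms and a residual piece admitting an atomic decomposition of strictly smaller total $H^1$-mass. The delicate point, handled in \cite{Ufeffstein, li} by a probabilistic averaging over admissible directions within the finite subcover of $\mathbb{S}^{d-1}$ furnished by the antipodal lower bound $c_0$, is to verify that the expected reduction factor is bounded strictly below $1$ uniformly in the atom. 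Once this reduction with a definite geometric factor is in hand, the remaining steps of assembling the factorization (iteration, summation, and passage from atoms to general $H^1$-functions) are routine.
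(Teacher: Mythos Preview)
The paper does not prove this statement at all: it is quoted verbatim as ``a theorem by Li'' and used as a black box to start the induction on the number of parameters. So there is no ``paper's own proof'' to compare against; the paper's approach is simply to cite \cite{li}.

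Your proposal goes further and tries to outline how the result is actually obtained. The architecture you describe---pass to weak factorization of $H^1$ by $H^1$--$\mathrm{BMO}$ duality, then run an iterative one-step reduction on atoms with a geometric contraction factor---is indeed the shape of the Uchiyama--Li machinery, and the antipodal condition $\sum_i|\theta_i(\xi)-\theta_i(-\xi)|\neq 0$ is correctly identified as the input that guarantees, in every direction, an operator with nontrivial odd symbol. As a roadmap this is accurate.

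That said, your sketch is not self-contained at the one place that matters. The entire difficulty of Li's theorem is the single-step reduction: producing $\phi,\psi$ and an index $k$ so that $a-\Pi_k(\phi,\psi)$ has strictly smaller $H^1$-norm, \emph{uniformly} over all atoms. You describe this step only heuristically (``$\phi$ a bump adapted to $Q$, choose $k$ and a direction from the Fourier content of $a$, construct $\psi$ to capture the bulk of $a$'') and then explicitly defer the verification of the contraction factor to \cite{Ufeffstein,li}. Since the theorem you are proving \emph{is} Li's theorem, this makes the argument circular rather than independent. If you want to present this as a proof rather than a citation, you need to actually carry out the construction of $\psi$ and the $H^1$-estimate on the residual; otherwise what you have is a correct summary of where the result comes from, which for the purposes of this paper is equivalent to what the authors do---cite Li.
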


We are also going to need the following weaker lower bound in terms of the
$\tmop{BMO}_{- 1} $ norm in terms of iterated commutators using our families of CZOs.

\begin{lemma}\label{l.lower-1}
  Let $t \geqslant 2$. Given classes $\mathcal{T}_s$ of CZOs with the class of their symbols 
  $\Theta_s$. Assume that for each
  parameter $1 \leqslant s \leqslant t$ separately we have
  \begin{enumerate}
    \item $\forall x \neq y \in \mathbb{S}^{d_s - 1} \; \exists \; \theta_{s, i}$
    so that $\theta_{s, i} ( x) \neq \theta_{s, i} ( y)$
    
    \item $\forall$ $x \in \mathbb{S}^{d_s - 1} \; \forall \; t \tmop{tangent}
    \tmop{to} \mathbb{S}^{d_s - 1} \tmop{in} x \; \exists \; i \tmop{so}
    \tmop{that} \frac{\partial \theta_{s, i}}{\partial t} ( x) \neq 0$
  \end{enumerate}
  and assume that under these same conditions the lower bound holds in the
  case of $t - 1$ parameters in terms of product $\tmop{BMO}$. Then we have
  the estimate
  \[ \| b \|_{\tmop{BMO}_{- 1}} \lesssim \sup_{\vec{k}} \| C_{\vec{k}} ( b,
     \cdot) \|_{2 \rightarrow 2}, \]
  where $C_{\vec{k}} ( b, \cdot) =$ $[T_{1, k_1} [... [T_{t, k_t}, M_b] ...]]$.
  Here $1 \leq s \leq t, \vec{k} = (k_1, ..., k_t), 0 \leq k_s \leq n_s$ and
  $T_{s, k_s}$ denotes the $k_s$th choice of CZO in the family $\mathcal{T}_s$
  acting in the $s$th variable.
  \end{lemma}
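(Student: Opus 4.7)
The plan is to reduce the $\tmop{BMO}_{-1}$ norm to a $(t-1)$-parameter product BMO norm of a ``slice'' of $b$, apply the induction hypothesis, and then use one-parameter weak factorization to lift the estimate back to the full $t$-parameter commutator.

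By the definition of $\tmop{BMO}_{-1}$, any collection $\mathcal{U}$ achieving (up to $\delta$) the supremum shares a common cube $Q$ in some distinguished coordinate $s$. Pairing against the wavelet $w_Q^{\varepsilon_s}$ in the $s$-th variable produces the slice
\[
B_{Q,\varepsilon_s}(x^{\setminus s}) \assign \langle b, w_Q^{\varepsilon_s}\rangle_{x_s},
\]
a function of $t-1$ variables, and a direct wavelet computation gives
\[
\|b\|_{\tmop{BMO}_{- 1}} \lesssim \sup_{s,\,Q,\,\varepsilon_s} |Q|^{-1/2}\,\|B_{Q,\varepsilon_s}\|_{\tmop{BMO}(\mathbb{R}^{\vec d \setminus s})}.
\]
Applying the $(t-1)$-parameter induction hypothesis to $B_{Q,\varepsilon_s}$ and dualizing yields $f',h'$ of unit $L^2$ norm and indices $\vec k^{\setminus s}$ with
\[
\|B_{Q,\varepsilon_s}\|_{\tmop{BMO}} - \delta \le \bigl|\langle B_{Q,\varepsilon_s}, \Pi_{\vec k^{\setminus s}}(f',h')\rangle\bigr| = \bigl|\langle b, \Pi_{\vec k^{\setminus s}}(f',h') \otimes w_Q^{\varepsilon_s}\rangle\bigr|,
\]
where $\Pi$ is the multilinear dual of the iterated commutator.

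To push this pairing back to the $t$-parameter commutator, I use the algebraic identity that for tensor test functions $F = f' \otimes \phi$ and $H = h' \otimes \psi$,
\[
\Pi_{\vec k}(F,H) = \Pi_{\vec k^{\setminus s}}(f',h') \otimes \pi_{k_s}(\phi,\psi), \qquad \pi_{k_s}(\phi, \psi) \assign \phi\,\overline{T^{*}_{s,k_s}\psi} - T_{s,k_s}\phi \cdot \overline{\psi}.
\]
This holds because each $T_{s,k_s}$ commutes with factors independent of $x_s$. It thus suffices to represent $w_Q^{\varepsilon_s}$ through linear combinations of the bilinear forms $\pi_{k_s}(\phi, \psi)$ with controlled $\ell^1$ cost. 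This is precisely one-parameter weak factorization of $H^1(\mathbb{R}^{d_s})$ with respect to the family $\mathcal{T}_s$, which by duality is equivalent to the base case $t=1$; our full point-separation hypothesis on $\Theta_s$ implies the asymmetry condition required by Li's theorem, so weak factorization is available. Since $w_Q^{\varepsilon_s}$ is cancellative and, up to the factor $|Q|^{1/2}$, an $H^1$ atom (or molecule, for the Meyer choice), we obtain a decomposition
\[
w_Q^{\varepsilon_s} = \sum_j \pi_{k_{s,j}}(\phi_j, \psi_j), \qquad \sum_j \|\phi_j\|_2\|\psi_j\|_2 \lesssim \|w_Q^{\varepsilon_s}\|_{H^1(\mathbb{R}^{d_s})} \lesssim |Q|^{1/2}.
\]
Substituting and applying the $t$-parameter operator bound term by term,
\[
|\langle b, \Pi_{\vec k^{\setminus s}}(f',h') \otimes w_Q^{\varepsilon_s}\rangle| \le \sum_j \|C_{\vec k}(b,\cdot)\|_{2\to 2}\|\phi_j\|_2\|\psi_j\|_2 \lesssim |Q|^{1/2} \sup_{\vec k}\|C_{\vec k}(b,\cdot)\|_{2\to 2},
\]
where $\vec k$ combines $\vec k^{\setminus s}$ with $k_{s,j}$ in the $s$-th slot. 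Dividing through by $|Q|^{1/2}$ and chaining with the slice inequality proves the lemma.

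The main obstacle is the last step: extracting a usable one-parameter weak factorization with explicit $\ell^1$ control from Li's commutator theorem, and certifying the $H^1$ bound $\|w_Q^{\varepsilon_s}\|_{H^1} \lesssim |Q|^{1/2}$ for the cancellative Meyer wavelet. The resulting decomposition is generally only an $\ell^1$ sum rather than a finite one, but this is harmless inside the final dual pairing.
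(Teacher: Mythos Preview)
Your proof is correct and follows essentially the same strategy as the paper's: both exploit the fixed-cube structure of $\textup{BMO}_{-1}$, invoke Li's one-parameter result in the distinguished coordinate via weak factorization, and apply the $(t-1)$-parameter induction hypothesis in the remaining variables, combining the two through the tensor identity $\Pi_{\vec k}(f'\otimes\phi,\,h'\otimes\psi)=\Pi_{\vec k^{\setminus s}}(f',h')\otimes\pi_{k_s}(\phi,\psi)$. The paper phrases this dually---building the test function $\psi=\psi_1\otimes\psi'$ and weak-factoring each factor---whereas you slice to $B_{Q,\varepsilon_s}$ and chain inequalities, but these are the same argument viewed from opposite sides of the $H^1$--$\textup{BMO}$ duality.
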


The proof uses a well established equivalent formulation of commutator estimates and weak
factorization. This argument goes back to Ferguson and Sadosky \cite{FS}. Our case is closest to the proof of lemma (6.3)
in \cite{LPPW}, replacing the collection of Riesz transforms by our families
$\mathcal{T}_s$. We include a sketch for the sake of completeness.

We assume that $ t\ge2$ and use the induction hypothesis to 
establish a lower bound in terms of our $ \textup{BMO}$ norm with $ t-1$ parameters.

\begin{proof}

It is sufficient to demonstrate that the following inequality holds,
\begin{equation}
\label{e.t-1}
\norm b.(L^2 * L^2)^*.\gtrsim\norm b.\textup{BMO}_{-1}.,
\end{equation}
and this will be established, inducting on the number of parameters.  Assume the truth of the Theorem in $t-1$ parameters.

Given a smooth symbol $b(x_1,\ldots x_t)=b(x_1,x')$ of $t$ parameters, 
we assume that $\norm b.\textup{BMO}_{-1}.=1$.  Assume the supremum is achieved by the collection $\mathcal{U}$ of $\mathcal{D}_{\vec d}$ of $t-1$ 
parameters. Say that  the rectangles in $ \mathcal U$ agree in the 
first coordinate, to a fixed cube $ Q\subset \mathbb R ^{d_1}$. 
After normalization, assume that $\abs{Q}=1$ and $\abs{\textnormal{sh}(\mathcal{U})}\approx 1$. 
Then define 
$$
\psi = \sum_{R\in\mathcal{U}}\sum_{\vec\varepsilon\in\textup{Sig}_{\vec d}}
\ip b,w_R^{\vec\varepsilon}, w_R^{\vec\varepsilon}.
$$
Note that $\ip b,\psi,=1$.  To prove the claim, it is then enough to
prove that $\norm \psi .L^2(\mathbb R^{\vec d})* L^2(\mathbb R^{\vec d}).\lesssim 1$.  Observe that $\psi(x)=\psi_1(x_1)\psi'(x')$ and $\psi_1\in H^1(\mathbb R^{d_1})$ with 
$$
\norm \psi_1.H^1(\mathbb R^{d_1}). =1.
$$
To $\psi_1$, apply the one parameter weak factorization of 
$H^1(\mathbb R^{d_1})$ resulting from the one-parameter characterization result of Li. 
There exists functions $f_n^j,g_n^j\in L^2(\mathbb R^{d_1})$, $n\in\mathbb N$, 
$1\leq j_1\leq d_1$, such that
$$
\psi_1=\sum_{n=1}^\infty\sum_{j_1=1}^{d_1}\Pi_{1,j_1}(f_n^{j_1},g_n^{j_1})
$$
where $\Pi_{1,j_1}(p,q):=T_{1,\, j_1}(p) q+pT_{1,\, j_1}(q)$. 
One next sees that $\psi'\in H^1(\otimes_{l=2}^{t}\mathbb R^{d_l})$ 
with norm controlled by a constant.  
By the induction hypothesis in $ t-1$ parameters, in particular that $H^1(\otimes_{l=2}^{t}
\mathbb R^{d_l})=L^2(\otimes_{s=2}^t\mathbb R^{d_s})*
L^2(\otimes_{s=2}^t\mathbb R^{d_s})$, we have $f_m^{\vec{j}},g_m^{\vec{j}}\in 
L^2(\otimes_{s=2}^{t}\mathbb R^{n_s})$ with $m\in\mathbb N$ and $\vec{j}$ a 
vector with $1\leq j_s\leq d_s$ for $s=2,\ldots, t$ such that
$$
\psi'=\sum_{m=1}^{\infty}\sum_{\vec{j}}\Pi_{\vec{j}}  
(f_m^{\vec{j}},g_m^{\vec{j}}),\qquad \sum_{m=1}^{\infty}\sum_{\vec{j}}
\norm f_m^{\vec{j}} .2. \norm g_m^{\vec{j}}.2. \lesssim 1.
$$

This immediately implies (\ref{e.t-1}) since $\psi=\psi_1\psi'$, and we have a weak factorization of $\psi$ with $\norm \psi .L^2(\mathbb R^{\vec d})* L^2(\mathbb R^{\vec d}).\lesssim 1$. 
\end{proof}

We now turn to the induction step in the main theorem, to finish the proof of the lower estimate
in terms of $\tmop{BMO}$ in $t$ parameters. 

\begin{proof}

  We start with any $\tmop{BMO}$ function $b$ so that $\| b \|_{\tmop{BMO} -
  1} < \delta_{- 1}$ is small. Notice that we have no loss of generality here: due to lemma
  (\ref{l.lower-1}), we already have a lower bound for such $b$ where $\| b
  \|_{\tmop{BMO}_{- 1}} \geqslant \delta_{- 1}$.
  
  We normalize the function $b$ as before, find the function $\beta$ and
  obtain cones $D_s$, the function $\gamma$ and cones $C_s$ according to 
  lemma (\ref{l.cone}). For a small positive number $\epsilon$ to be chosen, that determines
  the precision with which we approximate the cone transforms $T_{C_s}$,
  obtain operators $T_s$, polynomials in $\Theta \cup \bar{\Theta} \cup \{ 1
  \}$. 
  
  We are going to see
  that, indeed, the estimate
  \[ \|[T_1, ... [T_t, M_b] ...] \bar{\gamma} \|_2 \gtrsim 1 \]
  holds. The commutator consists of terms of the form $T \beta T' 
  \bar{\gamma}$ where $T, T'$ are combinations of $T_s$ and the identity. In
  the case where $T'$ is not the identity, it follows from
  lemma \ref{l.approximation} that the symbol of $T'$ is at most $\epsilon$ on the Fourier support
  of $\bar{\gamma}$. Such components are small:
  \[ \|T \beta T'  \bar{\gamma} \|_2 \lesssim \| \beta T'  \bar{\gamma} \|_2
     \lesssim \| \beta \|_4  \|T'  \bar{\gamma} \|_4 \lesssim \epsilon^{1 / 3}
  \]
  To obtain the last inequality, we observe the following: first, recall that
  $\beta$ is normalized both in $\tmop{BMO}$ and $L^2$. By interpolation we
  control $L^4$ norms uniformly. Observe also that $T'$ is at most $\epsilon$
  on the Fourier support of $\bar{\gamma}$, which gives us $\| T' \bar{\gamma}
  \|_2 \leq \epsilon$. In addition, $T'$ has universal $L^8$ norms independent
  of $\epsilon$. It is here that we use good approximation of the symbol
  controlling all derivatives. It remains to interpolate to obtain the
  estimate above.
  
  Now we are left with term $T \beta \bar{\gamma} = T_1 \ldots .T_t \beta
  \bar{\gamma}$ which we estimate as follows. Remember that $\gamma =
  T_{\vec{D}} \beta$ and write
  \[ \beta = \gamma + ( H_{\vec{D}} - T_{\vec{D}}) \beta + ( I - H_{\vec{D}})
     \beta, \]
  thus obtaining three terms. We will see that only one of them is large.
  
  The functions $( I - H_{\vec{D}}) \beta$ and $\bar{\gamma}$ are supported on
  the same product of half spaces complementary to cones $D_s$. We know that
  the symbol $c_{D, C}$ vanishes and therefore the $T_s$ are at most
  $\epsilon$, so
  \[ \| T ( ( I - H_{\vec{D}}) \beta \cdot \bar{\gamma}) \|_2 \leqslant
     \epsilon \| ( I - H_{\vec{D}}) \beta \cdot \bar{\gamma} \|_2 \leqslant
     \epsilon \| ( I - H_{\vec{D}}) \beta \|_4 \| \bar{\gamma} \|_4 . \]
  Recall the compositions of half plane projection operators have uniform
  $L^p$ bounds and that $L^4$ norms of both $\beta$ and $\gamma$ are
  controlled.
  
  For the part $T ( ( H_{\vec{D}} - T_{\vec{D}}) \beta \cdot \bar{\gamma})$
  we rely on the estimate from lemma \ref{l.cone} of the $L^4
  \tmop{norm}$
  \[  \| T ( ( H_{\vec{D}} - T_{\vec{D}}) \beta \cdot \bar{\gamma}) \|_2
     \lesssim \| ( H_{\vec{D}} - T_{\vec{D}}) \beta \cdot \bar{\gamma} \|_2
     \leqslant \kappa \| \bar{\gamma} \|_4 \lesssim \kappa . \]
  For the term $T ( \gamma \bar{\gamma})$we consider
  \[ | \| T \gamma \bar{\gamma} \|_2 - \| H_{\vec{C}} \gamma \bar{\gamma} \|_2
     | \leqslant \| ( T - H_{\vec{C}}) \gamma \bar{\gamma} \|_2 \leqslant \| (
     T - T_{\vec{C}}) \gamma \bar{\gamma} \|_2 + \| ( T_{\vec{C}} -
     H_{\vec{C}}) \gamma \bar{\gamma} \|_2 \lesssim \epsilon + \kappa \]
  Since $\gamma \bar{\gamma}$ is real with symmetric Fourier transform, we
  have $\| H_{\vec{C}} \gamma \bar{\gamma} \|_2 \gtrsim \| \gamma \bar{\gamma}
  \|_2 = \| \gamma \|^2_4$. Furthermore
  \[ \| \gamma \|_4^2 \gtrsim \left\| \left( \sum_{\varepsilon} \sum_{R \in
     \mathcal{U}} \frac{| \langle \gamma, w_R \rangle |^2}{| R |}
     \tmmathbf{1}_R  \right)^{1 / 2} \right\|_4^2 \gtrsim \left\| \left(
     \sum_{\varepsilon} \sum_{R \in \mathcal{U}} \frac{| \langle \gamma, w_R
     \rangle |^2}{| R |} \tmmathbf{1}_R  \right)^{1 / 2}
     \right\|_{2^{}}^2 \gtrsim 1 \]

  The first inequality uses a Littlewood Paley inequality and to see the
  second inequality, note that the rectangles in $\mathcal{U}$ are contained
  in a set of measure bounded by 1. We have therefore proved that $\| T (
  \gamma \bar{\gamma}) \|_2 \gtrsim 1.$

  We wish to prove that commutators that arise with our CZOs
  themselves are large, not just specific polynomials in those operators. To
  do so, observe the following elementary fact. Let $T, T'$ be CZO's. Then
  \[ [TT', M_b,] = T [T', M_b] + [T, M_b] T' . \]
  If the symbols of $T_s$ and $T_s'$ are polynomials in the $\theta_s$, it
  follows that for some choice of operators associated to $\theta_s$,
  \[ \| [ T_{1, k_1} [ \ldots . [ T_{t, k_t}, \beta]]] \bar{\gamma}' \|_2
     \gtrsim 1 \]
  where $\bar{\gamma}'$ is of the form $T \bar{\gamma}$ and where $T$ is a
  composition of operators $T_{s, l_s}$. Notice here that it is essential that
  we only approximate a finite set of cone operators so that we control
  degrees and coefficients of the arising polynomials.  This point is imperative, since we do not control
  degree or coefficients with Nachbin's approximation.  
  
  Recall that $\beta = P_{\mathcal{U}} b$ and that all dyadic rectangles are
  split into three groups $\mathcal{U} \dot{\cup \mathcal{V} \dot{\cup
  \mathcal{W}}}$. In order to see that the norm of the commutator satisfies
  $\| [ T_{1, k_1} [ \ldots . [ T_{t, k_t}, b]]] \|_{2 \rightarrow 2} \gtrsim
  1$, we use test function $\bar{\gamma}'$ and split the estimate according to
  partial sums of the symbol $b$ of only those rectangles belonging to classes
  $\mathcal{U}, \mathcal{V}, \mathcal{W}$ respectively. We have already seen
  that
  \[ \| [ T_{1, k_1} [ \ldots . [ T_{t, k_t}, P_{\mathcal{U}} b]]]
     \bar{\gamma}' \|_2 \gtrsim 1. \]
  It remains to see that the remaining parts are small. We are going to see
  that
  \[ \| [ T_{1, k_1} [ \ldots . [ T_{t, k_t}, P_{\mathcal{V}} b]]]
     \bar{\gamma}' \|_2 \lesssim \delta_J^{1 / 4}, \]
  the part of the estimate responsive to Journee's lemma and also that
  \[ \| [ T_{1, k_1} [ \ldots . [ T_{t, k_t}, P_{\mathcal{W}} b]]]
     \bar{\gamma}' \|_2 \lesssim \delta_{- 1} . \]
  For these two estimates, we can follow directly the arguments in LPPW.
  
  The first estimate illustrates the use of Journe's lemma in this context.
  We do not need to use any cancellation of the commutator:
  \[ \| [ T_{1, k_1} [ \ldots . [ T_{t, k_t}, P_{\mathcal{V}} b]]]
     \bar{\gamma}' \|_2 \lesssim \| P_{\mathcal{V}} b \|_4 \| \gamma \|_{_4}
  \]
  where the implied constant depends upon $L^2$ and $L^4$ operator norms of
  the $T_{s, k_s}$. The $L^4$ norm of $\gamma$ is uniformly controlled and by
  construction we have $\| P_{\mathcal{V}} b \|_{\tmop{BMO}} \leqslant 1$.
  Last, Journe's lemma provides us with the estimate $\| P_{\mathcal{V}} b
  \|_2^2 \leqslant \delta_J$. Interpolation then gives $\| P_{\mathcal{V}} b
  \|_4 \lesssim \delta_J^{1 / 4}$.
  
  The last estimate requires a careful analysis, but does not use the
  specifics of our operators, except the control on a large number of
  derivatives of the kernel. We therefore appeal to the version in LPPW, where
  the estimate was stated for Riesz transforms but in fact carried out for
  more general CZOs with control on a large number of derivatives, such as the
  ones we have here.

 \end{proof}

  \section{Concluding Remarks}
  
  \begin{remark}
    Our theorem is a generalization of the Riesz transform case, but it falls
    short of recovering the full Uchiyama-Li criterion in several parameters.
    Li's criterion only requires point separation of all pairs $\xi$ and $-
    \xi$ on the sphere. This criterion is quite natural as it makes sure there
    is an operator in the family that has a singularity in a given direction,
    for all directions. Due to the method of proof, we felt the need to
    require point separation for all pairs of points as well as a derivative
    condition. The strategy to obtain lower bounds in this multi parameter
    setting remains analytic in nature - while we are not able to use Fourier
    projections directly as in one dimension, we build operators that are
    close enough to still pretend we are in the one dimensional setting.
    Families that have Li's criterion are not enough to approximate the
    operators we need in the norm of uniform convergence in $\mathcal{C} (
    \mathcal{S}^{d_{} - 1})$ much less in $\mathcal{C}^n ( \mathcal{S}^{d_{} -
    1})$. We require the latter because we need excellent convergence of
    multiplier symbols on the Fourier transform side in order to draw
    meaningful conclusions. It is interesting to remark that, in cases like
    ours, one easily proves a version of Stone Weierstrass theorem that can
    handle defects in the sense that it is clear which algebra is generated by
    a family of functions with defects, such as a lack of point separation for
    a given pair of $\xi$ and $\zeta$ in$\mathcal{S}^{d_{} - 1}$. One uses
    factor spaces to see that the generated algebra will have the exact same
    set of defects: the algebra generated by a family that lacks point
    separation for a set of pairs $( \xi, \zeta)$ will be the subalgebra with
    that same property. The situation is not so simple if one needs uniform
    approximation in $\mathcal{C}^n ( \mathcal{S}^{d_{} - 1})$. Due to the
    necessary conditions on the tangential derivatives, the situation becomes
    very complex when the family has defects, such as a lack of point
    separation in just one point or the lack of non-zero tangential
    derivatives. The corresponding subalgebras are unknown since the 1950s.
  \end{remark}

   \begin{bibsection} 
 \begin{biblist} 
 

\bib{varjourne}{article}{
    title={Variations on the Theme of Journe's Lemma},
    author={Cabrelli, Carlos},
    author={Lacey, Michael},
    author={Molter, Ursula},
    author={Pipher,  Jill},
    journal={Houston J Math.}
     volume={32},
      date={2006},
    number={3},
     pages={833\ndash 861},
      issn={0362-1588},
    }

\bib{C}{article}{
	author={Carleson, L.},
	title={A counterexample for measures bounded on $H^p$ spaces for the bidisk},
	journal={Mittag-Leffler Rep. No. 7, Inst. Mittag-Leffler},
	year={1974},
	}

\bib{CF1}{article}{
    author={Chang, Sun-Yung A.},
    author={Fefferman, Robert},
     title={Some recent developments in Fourier analysis and $H\sp p$-theory
            on product domains},
   journal={Bull. Amer. Math. Soc. (N.S.)},
    volume={12},
      date={1985},
    number={1},
     pages={1\ndash 43},
      issn={0273-0979},
    review={MR 86g:42038},
}

\bib{CF2}{article}{
    author={Chang, Sun-Yung A.},
    author={Fefferman, Robert},
     title={A continuous version of duality of $H\sp{1}$ with BMO on the
            bidisc},
   journal={Ann. of Math. (2)},
    volume={112},
      date={1980},
    number={1},
     pages={179\ndash 201},
      issn={0003-486X},
    review={MR 82a:32009},
}

\bib{CLMS}{article}{
   author={Coifman, R.},
   author={Lions, P.-L.},
   author={Meyer, Y.},
   author={Semmes, S.},
   title={Compensated compactness and Hardy spaces},
   language={English, with English and French summaries},
   journal={J. Math. Pures Appl. (9)},
   volume={72},
   date={1993},
   number={3},
   pages={247--286},
   issn={0021-7824},
   review={\MR{1225511 (95d:46033)}},
}

\bib{CRW}{article}{
    author={Coifman, R. R.},
    author={Rochberg, R.},
    author={Weiss, Guido},
     title={Factorization theorems for Hardy spaces in several variables},
   journal={Ann. of Math. (2)},
    volume={103},
      date={1976},
    number={3},
     pages={611\ndash 635},
    review={MR 54 \#843},
}

\bib{fefferman3}{article}{
    author={Fefferman, R.},
     title={A note on Carleson measures in product spaces},
   journal={Proc. Amer. Math. Soc.},
    volume={93},
      date={1985},
    number={3},
     pages={509\ndash 511},
      issn={0002-9939},
    review={MR 86f:32004},
}

\bib{fefferman}{article}{
    author={Fefferman, R.},
     title={Bounded mean oscillation on the polydisk},
   journal={Ann. of Math. (2)},
    volume={110},
      date={1979},
    number={2},
     pages={395\ndash 406},
      issn={0003-486X},
    review={MR 81c:32016},
}

\bib{fefferman2}{article}{
    author={Fefferman, Robert},
     title={Harmonic analysis on product spaces},
   journal={Ann. of Math. (2)},
    volume={126},
      date={1987},
    number={1},
     pages={109\ndash 130},
      issn={0003-486X},
    review={MR 90e:42030},
}

\bib{FL}{article}{
    author={Ferguson, Sarah H.},
    author={Lacey, Michael T.},
     title={A characterization of product BMO by commutators},
   journal={Acta Math.},
    volume={189},
      date={2002},
    number={2},
     pages={143\ndash 160},
      issn={0001-5962},
    review={1 961 195},
}

\bib{FS}{article}{
    author={Ferguson, Sarah H.},
    author={Sadosky, Cora},
     title={Characterizations of bounded mean oscillation on the polydisk in
            terms of Hankel operators and Carleson measures},
   journal={J. Anal. Math.},
    volume={81},
      date={2000},
     pages={239\ndash 267},
      issn={0021-7670},
    review={MR 2001h:47040},
}

\bib{G}{book}{
    author={Grafakos, Loukas},
    title={Classical Fourier Analysis}
    }


\bib{J}{article}{
    author={Journ{\'e}, Jean-Lin},
     title={A covering lemma for product spaces},
   journal={Proc. Amer. Math. Soc.},
    volume={96},
      date={1986},
    number={4},
     pages={593\ndash 598},
      issn={0002-9939},
    review={MR 87g:42028},
}



%

%

\bib{LPPW}{article}{
    AUTHOR = {Lacey, Michael T.} 
     AUTHOR = {Petermichl, Stefanie} 
      AUTHOR = {Pipher, Jill C.}
         AUTHOR = {Wick, Brett D.},
     TITLE = {Multiparameter {R}iesz commutators},
   JOURNAL = {Amer. J. Math.},
  FJOURNAL = {American Journal of Mathematics},
    VOLUME = {131},
      YEAR = {2009},
    NUMBER = {3},
     PAGES = {731--769},
      ISSN = {0002-9327},

       }
	

\bib{LPPWdivcurl}{article}{
    AUTHOR = {Lacey, Michael T.}
    AUTHOR = {Petermichl, Stefanie}
    AUTHOR = {Pipher, Jill C.}
    AUTHOR = {Wick, Brett D.},
     TITLE = {Multi-parameter Div-Curl lemmas},
   JOURNAL = {Bull. Lond. Math. Soc.},
  FJOURNAL = {Bulletin of the London Mathematical Society},
    VOLUME = {44},
      YEAR = {2012},
    NUMBER = {6},
     PAGES = {1123--1131},
      ISSN = {0024-6093},
   MRCLASS = {42B37 (35F35)},
  }

\bib{LT}{article} {
    AUTHOR = {Lacey, Michael} 
    AUTHOR = {Terwilleger, Erin},
     TITLE = {Hankel operators in several complex variables and product
              BMO},
   JOURNAL = {Houston J. Math.},
  FJOURNAL = {Houston Journal of Mathematics},
    VOLUME = {35},
      YEAR = {2009},
    NUMBER = {1},
     PAGES = {159--183},
      ISSN = {0362-1588},
        
}


\bib{li}{article}{
    author={Li, Song-Ying},
     title={Characterization of the boundedness for a family of commutators
            on $L\sp p$},
   journal={Colloq. Math.},
    volume={70},
      date={1996},
    number={1},
     pages={59\ndash 71},
      issn={0010-1354},
    review={MR1373281 (97g:42012)},
}


\bib{meyer}{book}{
    author={Meyer, Yves},
     title={Ondelettes et op\'erateurs. I},
  language={French},
    series={Actualit\'es Math\'ematiques. [Current Mathematical Topics]},
      note={Ondelettes. [Wavelets]},
 publisher={Hermann},
     place={Paris},
      date={1990},
     pages={xii+215},
      isbn={2-7056-6125-0},
    review={MR1085487 (93i:42002)},
}

%
%
%
%

\bib{Na}{article} {
    AUTHOR = {Nachbin, Leopoldo},
     TITLE = {Sur les alg\`ebres denses de fonctions diff\'erentiables sur
              une vari\'et\'e},
   JOURNAL = {C. R. Acad. Sci. Paris},
    VOLUME = {228},
      YEAR = {1949},
     PAGES = {1549--1551},
   MRCLASS = {27.2X},
  MRNUMBER = {0030590 (11,20a)},
MRREVIEWER = {R. Arens},
}

\bib{Ne}{article}{
    author={Nehari, Zeev},
     title={On bounded bilinear forms},
   journal={Ann. of Math. (2)},
    volume={65},
      date={1957},
     pages={153\ndash 162},
    review={MR 18,633f},
}

%

%



\bib{Pi}{article}{
    author={Pipher, Jill},
     title={Journ\'e's covering lemma and its extension to higher
            dimensions},
   journal={Duke Math. J.},
    volume={53},
      date={1986},
    number={3},
     pages={683\ndash 690},
      issn={0012-7094},
    review={MR 88a:42019},
}


 \bib{Uhankel}{article}{
    author={Uchiyama, Akihito},
     title={On the compactness of operators of Hankel type},
   journal={T\^ohoku Math. J. (2)},
    volume={30},
      date={1978},
    number={1},
     pages={163\ndash 171},
      issn={0040-8735},
    review={MR0467384 (57 \#7243)},
}

\bib{Uhardychar}{article}{
    AUTHOR = {Uchiyama, Akihito},
     TITLE = {On the characterization of {$H^p({\bf R}^n)$} in terms
              of {F}ourier multipliers},
   JOURNAL = {Proc. Amer. Math. Soc.},
  FJOURNAL = {Proceedings of the American Mathematical Society},
    VOLUME = {109},
      YEAR = {1990},
    NUMBER = {1},
     PAGES = {117--123},
      ISSN = {0002-9939},
   
}

\bib{Ufeffstein} {article}{
    AUTHOR = {Uchiyama, Akihito},
     TITLE = {A constructive proof of the {F}efferman-{S}tein decomposition
              of {BMO} {$({\bf R}^{n})$}},
   JOURNAL = {Acta Math.},
  FJOURNAL = {Acta Mathematica},
    VOLUME = {148},
      YEAR = {1982},
     PAGES = {215--241},
      
}
 
  \end{biblist} 
 \end{bibsection} 
  
\end{document}